\newtheorem{theorem}{Theorem}[section]
\newtheorem{lemma}[theorem]{Lemma}
\newtheorem{cor}[theorem]{Corollary}
\newtheorem{prop}[theorem]{Proposition}
\theoremstyle{definition}
\theoremstyle{remark}
\numberwithin{equation}{section}
\def\a{\alpha}
\def\b{\beta}
\def\g{\gamma}
\def\|{\parallel}
\def\<{\langle}
\def\>{\rangle}
\def\R{\mathbb{R}}
\def\s{\sharp}
\newcommand{\B}{\mathbb B}
\newcommand{\A}{\mathcal A}
\newcommand{\M}{\mathcal M}
\newcommand{\Mn}{\mathbb M_n}
\newcommand{\Md}{\mathbb M_d}
\renewcommand{\H}{\mathcal H}
\renewcommand{\le}{\leqslant}
\renewcommand{\leq}{\leqslant}
\renewcommand{\ge}{\geqslant}
\renewcommand{\geq}{\geqslant}
\newcommand{\C}{\mathbb C}
\begin{document}

\title{ An asymmetric Kadison's inequality  }

\author{Jean-Christophe Bourin}
\address{Laboratoire de math\'ematiques,
 Universit\'e de Franche-Comt\'e, 16 route de Gray,
 25030 Besan\c{c}on, Fran\-ce}
\email{jcbourin@univ-fcomte.fr}


\author{\'Eric Ricard}
\address{Laboratoire de math\'ematiques,
 Universit\'e de Franche-Comt\'e, 16 route de Gray,
 25030 Besan\c{c}on, Fran\-ce}
\email{eric.ricard@univ-fcomte.fr}

\subjclass[2000]{Primary 15A60, 47A30, 47A60 }

\date{ February 28, 2010.}

\keywords{Operator inequalities, Matrix geometric mean, Positive
linear maps.}
\thanks{Research of both authors was supported by ANR 06-BLAN-0015}

\begin{abstract} Some  inequalities for  positive
linear maps on matrix algebras are given, especially asymmetric
extensions of  Kadison's inequality and  several operator versions
of Chebyshev's inequality. We also discuss well-known results around
the matrix geometric mean and connect it with complex interpolation.
\end{abstract}

\maketitle

\section*{ Introduction }

\medskip

This note lies in the scope of matricial inequalities. The main
motivation of this theory is to extend some classical inequalities
for reals to self-adjoint matrices. Of course, the non-commutativity
of $\Mn$ (the space of $n\times n$ complex matrices) enters into the
game, making things much more complicated. The book \cite{Bha} is a
very good introduction to this subject. Many techniques have been
developed, such as the theory of operator monotone/convex functions
and their links with completely positive maps.  Nevertheless  the
proofs very often rely on quite clever but simple arguments. As an
illustration of the available tools, a very classical result is
Kadison's inequality \cite{K} saying that if $\Phi:\A\to \M$ is a
unital positive (linear) map between $C^*$-algebras, then for a
self-adjoint element $A$ in $\A$,
$$\Phi(A)^2\leq \Phi(A^2).$$ Taking $\Phi: \Mn\oplus \Mn\to \Mn$,
 $\Phi(A,B)=(A+B)/2$, this reflects the operator convexity of $t\mapsto
 t^2$. One can think of it as a kind of Jensen's or Cauchy-Schwarz's
 inequality. The main
 motivation of this paper is to try to get comparison relations
 between the images of the powers of $A$. At a first glance, one does
 not expect to have many positive results beyond operator
 convexity. But surprisingly, we notice here that
 $|\Phi(A^p)\Phi(A^q)|\leq \Phi(A^{p+q})$ provided that $0\leq p\leq
 q$ and $A\geq 0$. This and some variations are our concern of the first
 section.

The second section deals more generally with monotone pairs,
 in place of pairs $(A^p, A^q)$. These are  pairs
 $(A,B)$ of positive operators, characterized by joint relations
 $A=f(C)$ and $B=g(C)$ for some $C\ge 0$ in $\Mn$ and two
  non-decreasing, non-negative functions $f(t)$ and $g(t)$ on
 $[0,\infty)$. Comparing $\Phi(A)\Phi(B)$ with $\Phi(AB)$ is
 non-commutative versions of the classical
 Chebyshev's inequality,
$$\Big(\frac 1 n\sum_{i=1}^n a_i\Big)\cdot\Big( \frac 1 n\sum_{i=1}^n b_i\Big)\leq\frac 1 n \sum_{i=1}^n a_ib_i
$$ for non-negative increasing sequences $\{a_i\}$ and $\{b_i\}$ (here
$\Phi$ is just a state).

In the last part, we point out the links between complex
interpolation and power means. Along with some very classical
approach and an idea of \cite{FK}, this  is used to furnish a simple
proof of Furuta's inequality, which is the main tool in section 1.

We assume that the reader is familiar with basic notions in operator
and matricial inequalities theories. When possible, we state the
results in their general context, that is, for von Neumann or
$C^*$-algebras. But matrix inequalities for positive linear maps are
essentially finite dimensional results, especially when it comes to
unitary congruences. So, the reader may like to think of the
algebras as $\Mn$.

\section{Kadison's asymmetric type inequalities}

In this section, we deal with   positive linear maps $\Phi :
{\mathcal{A}}\to{\mathcal{M}}$ between two unital $C^*$-algebra $\A$
and   $\M$ with units denoted by $I$. In fact, we may assume that
${\mathcal{A}}$ is the unital $C^*$-algebra  generated by a single
positive operator $A$; hence, by a classical dilation theorem of
Naimark (see \cite{Paulsen}, Theorem 3.10), our maps $\Phi$ will be
automatically  completely positive. We will also always assume that
these maps are unital, $\Phi(I)=I$, or more generally
sub-unital, $\Phi(I)\le I$.

Kadison's inequality is one
 of the most basic and useful results for such sub-unital  maps; it states
that for any $A\in \A^{sa}$ (the self-adjoint elements in $\A$),
$$\Phi(A)^2 \le \Phi(A^2).$$
More generally, if  $f$ is operator convex on an interval containing
$0$ and $f(0)\le0$, then one has
$$f(\Phi(A))\leq \Phi(f(A))$$
for all   $A\in \A^{sa}$ with spectrum in the domain of $f$. If we
drop the condition that $0$ is in the domain of $f$ and $f(0)\le0$,
  this Jensen's inequality remains true for unital maps.
 When  $\Phi$ is the compression map to a subspace,   it
 is then a basic characterization of operator convexity due to Davis
\cite{Davis}. The general case was noted in an influential paper of
Choi \cite{C}; nowadays everything is very clear using Stinespring's
theorem (see \cite{Paulsen}, Theorem 4.1) for completely positive maps.

 First examples of operator convex/concave functions
on $\R^+$ are given by powers, we refer to the corresponding Jensen
inequalities as Choi's inequality; for $A\in \A^+$ (the positive
cone of $\A$),
$$\Phi(A^p)\le\Phi(A)^p, \quad 0\le p\le 1,
$$
and
$$
\Phi(A)^p\le\Phi(A^p), \quad 1\le p\le 2.$$ In the spirit of
operator convexity, one can naturally think of looking for more
comparison relations between powers of $A$.

We start with an asymmetric extension of Kadison's inequality :

\medskip
 \begin{theorem}\label{kad}
   Let $A\in \A^+$ and\, $0\le p\le q$. Then,
$$
|\Phi(A^p)\Phi(A^q)| \le \Phi(A^{p+q}).
$$
\end{theorem}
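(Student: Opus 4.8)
The plan is to reformulate the inequality so that Furuta's inequality can be applied once, with a carefully chosen triple of exponents, after a single appeal to Choi's inequality.

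Since $\Phi(A^{p})$ and $\Phi(A^{q})$ are positive and $|W|=(W^{*}W)^{1/2}$, the assertion is equivalent to
\[
\bigl(\Phi(A^{q})\,\Phi(A^{p})^{2}\,\Phi(A^{q})\bigr)^{1/2}\le\Phi(A^{p+q}).
\]
I would first dispose of the degenerate cases: if $q=0$ then $p=0$ and the inequality is $\Phi(I)^{2}\le\Phi(I)$, which holds because $0\le\Phi(I)\le I$; if $p=0<q$ then $\Phi(I)^{2}\le I$ gives $\Phi(A^{q})\Phi(I)^{2}\Phi(A^{q})\le\Phi(A^{q})^{2}$ and hence, taking square roots, the claim. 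So assume $0<p\le q$.

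Now I would bring in Choi's inequality, applied to the positive operator $A^{q}$: since $p/q\in(0,1]$ the function $t\mapsto t^{p/q}$ is operator concave, and since $(p+q)/q=1+p/q\in(1,2]$ the function $t\mapsto t^{(p+q)/q}$ is operator convex; both vanish at $0$, so for the (sub)unital positive map $\Phi$,
\[
\Phi(A^{p})\le\Phi(A^{q})^{p/q}\qquad\text{and}\qquad\Phi(A^{q})^{(p+q)/q}\le\Phi(A^{p+q}).
\]
In particular $C:=\Phi(A^{q})^{p/q}\ge\Phi(A^{p})=:D\ge0$.

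The crux is the choice of parameters in Furuta's inequality: for $C\ge D\ge0$ and exponents $s\ge0$, $t\ge0$, $u\ge1$ with $(1+t)u\ge s+t$ one has $C^{(s+t)/u}\ge\bigl(C^{t/2}D^{s}C^{t/2}\bigr)^{1/u}$. I would apply this with $s=2$, $t=2q/p$ and $u=2$; the admissibility condition $(1+2q/p)\cdot2\ge 2+2q/p$ reduces to $4q/p\ge 2q/p$ and is satisfied. Since $C^{t/2}=\bigl(\Phi(A^{q})^{p/q}\bigr)^{q/p}=\Phi(A^{q})$ and $C^{(s+t)/u}=\bigl(\Phi(A^{q})^{p/q}\bigr)^{1+q/p}=\Phi(A^{q})^{(p+q)/q}$, the conclusion is exactly
\[
\Phi(A^{q})^{(p+q)/q}\ge\bigl(\Phi(A^{q})\,\Phi(A^{p})^{2}\,\Phi(A^{q})\bigr)^{1/2},
\]
and combining this with $\Phi(A^{q})^{(p+q)/q}\le\Phi(A^{p+q})$ from the previous paragraph finishes the proof. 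I expect the only genuine obstacle to be spotting the exponents $(s,t,u)=(2,\,2q/p,\,2)$ -- note in particular that the seemingly natural stronger statement $\Phi(A^{q})\Phi(A^{p})^{2}\Phi(A^{q})\le\Phi(A^{p+q})^{2}$ is false, so one really must keep the square root throughout; once the right exponents are found the argument is a two-line chain of inequalities, which is presumably why Furuta's inequality is highlighted as the main tool of this section. (If one prefers, the substitution $A\mapsto A^{1/q}$ reduces everything to $q=1$, $p\in(0,1]$ first, making the exponents read $(2,2/p,2)$.)
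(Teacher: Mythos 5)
Your argument is essentially identical to the paper's: both apply Choi's inequality to get $\Phi(A^{p})\le\Phi(A^{q})^{p/q}$, then Furuta's inequality with the same exponents (your $(s,t,u)=(2,2q/p,2)$ is exactly the paper's $(\alpha,\beta,\gamma)=(2,q/p,2)$ under the substitution $t=2\beta$), and finally Choi again to pass from $\Phi(A^{q})^{1+p/q}$ to $\Phi(A^{p+q})$. The only difference is that you explicitly dispose of the degenerate cases $p=0$ or $q=0$, which the paper leaves implicit.
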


\medskip
\begin{proof} We will derive this result from Furuta's inequality that we
recall as follows : {\it Let $X\ge Y\ge 0$ in some $\B(\H)$,
 let $\a\ge 1$ and $\b\ge 0$. Then, for
$\g\ge (\a+2\b)/(1+2\b)$,
$$
X^{(\a+2\b)/\g} \ge (X^\b Y^\a X^\b)^{1/\g},
$$
with equality if and only if $X=Y$.}

\noindent We will discuss about it in Section 3.
 Now, set
$$
X=\Phi(A^q)^{\frac{p}{q}}, \qquad Y=\Phi(A^p).
$$
By Choi's inequality, $X\ge Y$. Then we apply Furuta's inequality to
$X$ and $Y$ with
$$
\a=2,\quad \b=\frac{q}{p},\quad \g=2.
$$
(Note that
$$ \g =2\ge \frac{2+2(q/p)}{1+2(q/p)}=\frac{\a
+2\b}{1+2\b}
$$
so that assumptions of Furuta's inequality are satisfied.) Thus we
obtain
$$
\{\Phi(A^q)^{\frac{p}{q}}\}^{\frac{2+2(q/p)}{2}} \ge
 \left(
\{\Phi(A^q)^{\frac{p}{q}}\}^{\frac{q}{p}}\{\Phi(A^p)\}^2\{\Phi(A^q)^{\frac{p}{q}}\}^{\frac{q}{p}}
\right)^{1/2},
$$
equivalently
\begin{equation}
\Phi(A^q)^{1+p/q}\ge |\Phi(A^p)\Phi(A^q)|.
\end{equation}
Since $1\le 1+p/q\le 2$, using once again Choi's inequality for
operator convex functions,
\begin{equation}
\Phi(A^{p+q})\ge\Phi(A^q)^{1+p/q}.
\end{equation}
Combining (1.1) and (1.2) completes  the proof.
\end{proof}

\medskip

\noindent {\bf Remark}. Actually, we have shown the stronger
inequality (1.1) that can be restated as follows : For $0\le\a\le
1$, $\Phi(A)^{1+\a}\ge |\Phi(A^\a)\Phi(A)|$.

\medskip
\noindent {\bf Remark.} For $0<p\le q$, the equality case in Theorem
\ref{kad} entails the equality case in Choi's inequality, so that
$\Phi(A^t)=\Phi(A)^t$ for all $t>0$, in other words $A$ is in the
multiplicative domain of $\Phi$.

 \medskip

\begin{cor}\label{kadinv} Assume that moreover $\M$ is a von Neumann algebra,
then for $A\in \A^+$ and  $p,\,q\geq 0$, there is a partial isometry
$V\in \M$  such  that
$$
|\Phi(A^p)\Phi(A^q)| \le V\Phi(A^{p+q})V^*.
$$
If $\M$ is finite, then $V$ can be chosen to be unitary.
\end{cor}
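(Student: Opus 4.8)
The plan is to reduce everything to Theorem~\ref{kad} and to use the polar decomposition in the von Neumann algebra $\M$. If $p\le q$ there is nothing to do: Theorem~\ref{kad} gives the bound with $V=I$ (which is also unitary). So suppose $q\le p$ and set $T=\Phi(A^q)\Phi(A^p)\in\M$. Applying Theorem~\ref{kad} to the pair $(q,p)$ --- now in the correct order --- yields $|T|\le\Phi(A^{p+q})$, whereas the quantity to be estimated is $|\Phi(A^p)\Phi(A^q)|$, which equals $|T^*|$ since $\Phi(A^p)$ and $\Phi(A^q)$ are positive (so $T^*=\Phi(A^p)\Phi(A^q)$).

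I would then write the polar decomposition $T=V|T|$ with $V\in\M$ a partial isometry whose initial projection $V^*V$ is the support of $|T|$; this is precisely where the von Neumann algebra hypothesis is used. From $V^*V|T|=|T|$ one gets $(V|T|V^*)^2=V|T|^2V^*=TT^*$, and since $V|T|V^*\ge0$ this identifies $|T^*|=V|T|V^*$. Conjugating $|T|\le\Phi(A^{p+q})$ by $V$ then gives
$$|\Phi(A^p)\Phi(A^q)| = V|T|V^* \le V\Phi(A^{p+q})V^*,$$
which is the first assertion.

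For the second, recall that in a finite von Neumann algebra equivalent projections have equivalent complements, so $1-V^*V\sim 1-VV^*$; choosing a partial isometry $W\in\M$ with $W^*W=1-V^*V$ and $WW^*=1-VV^*$, the element $U:=V+W$ is a unitary in $\M$ extending $V$. Since $|T|$ is supported under $V^*V$, the cross terms involving $W$ vanish and $U|T|U^*=V|T|V^*$, so the same inequality holds verbatim with the unitary $U$ in place of $V$.

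I do not expect a real obstacle here. The two points deserving care are the identity $|T^*|=V|T|V^*$ (standard, but worth spelling out from uniqueness of the positive square root) and the completion of a partial isometry to a unitary, which genuinely uses finiteness through the comparison theory of projections; one must also be careful to feed Theorem~\ref{kad} the smaller exponent first.
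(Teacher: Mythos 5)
Your argument is correct and is precisely the paper's proof, spelled out in more detail: reduce to the ordered case via Theorem~\ref{kad}, then use the polar decomposition $T=V|T|$ and the identity $|T^*|=V|T|V^*$, with finiteness invoked only to extend $V$ to a unitary through equivalence of the complementary projections. Nothing to add.
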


This follows from Theorem \ref{kad} and the polar decomposition.
Indeed, for any $Z\in \M$, there is a partial isometry $V$ so that
$Z=V|Z|$ and moreover $|Z^*|=V|Z|V^*$ and $|Z|=V^*|Z^*|V$. If $\M$
is finite, then $V$ can also be chosen unitary. But in general, one
can not assume $V$ to be unitary.

\medskip
\renewcommand{\epsilon}{\varepsilon}
\noindent {\bf Remark.}  Fix $0< q<p$. Let
$$A=\left[
\begin{array}{ccc}
1+\epsilon &\epsilon& \epsilon \\ \epsilon &2+\epsilon& \epsilon \\
\epsilon &\epsilon& 3+\epsilon
\end{array}\right], \qquad \qquad
B=\left[ \begin{array}{ccc} 1 &1 & \frac 1 2\\  1 & 1 &\frac 1 2
\\ \frac 1 2 & \frac 1 2 & 1 \end{array}\right]$$
and let $\Phi$ be the Schur product with $B$. Then for $\epsilon$
small enough, it follows from tedious computations on derivatives,
that we can not get rid of $V$ in Corollary \ref{kadinv} like in
Theorem \ref{kad}.

\medskip

From now on, we come back to the setting of matrix inequalities where
$\M=\Mn$ for some positive integer $n$.

The next two results are variations of Corollary \ref{kadinv}. We
rely on an easy consequence of the min-max principle; if $A\ge B\ge
0$ in $\Mn$ and $f(t)$ is non-decreasing on $[0,\infty[$, then
$f(A)\ge Vf(B)V^*$ for some unitary $V\in \Mn$.

\begin{prop} Let $A\in \A^+$ and $p,q,r\ge 0$ such
that $\min\{p,r\}\le q/2$ and $\max\{p,r\} \le q$. Then, for some
unitary $V\in \Mn$,
$$
|\Phi(A^p)\Phi(A^q)\Phi(A^r)| \le V\Phi(A^{p+q+r})V^*.
$$
\end{prop}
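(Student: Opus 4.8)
The plan is to establish the equivalent inequality $|\Phi(A^p)\Phi(A^q)\Phi(A^r)|\preceq\Phi(A^{p+q+r})$, writing $X\preceq Y$ for positive $X,Y\in\Mn$ to mean $\lambda_k(X)\le\lambda_k(Y)$ for all $k$ (eigenvalues in nonincreasing order), equivalently $X\le VYV^{*}$ for some unitary $V$. I will use that $X\le Y\Rightarrow X\preceq Y$, that $\preceq$ is transitive and respects $f(\cdot)$ for $f$ nondecreasing on $[0,\infty)$, that (by the recalled min-max consequence) $X\le Y$ already yields $f(X)\preceq f(Y)$, and that $|W|$ and $|W^{*}|$ have the same eigenvalues. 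The hypothesis is symmetric in $p,r$, and replacing $Z:=\Phi(A^p)\Phi(A^q)\Phi(A^r)$ by $Z^{*}=\Phi(A^r)\Phi(A^q)\Phi(A^p)$ interchanges $p$ and $r$ without changing the eigenvalues of $|Z|$, so I may assume $p\le r$; then $2p\le q$ and $r\le q$. I also assume $p,q,r>0$, the remaining cases being immediate from Corollary~\ref{kadinv} (multiplication by the contraction $\Phi(I)$ cannot increase singular values).

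The key step is an \emph{honest} operator inequality for the inner block, and it is here that $2p\le q$ is decisive. By Kadison's inequality $\Phi(A^p)^2\le\Phi(A^{2p})$; since $2p/q\le1$, Choi's inequality for the operator concave power $t\mapsto t^{2p/q}$ gives $\Phi(A^{2p})=\Phi((A^q)^{2p/q})\le\Phi(A^q)^{2p/q}$; conjugating by $\Phi(A^q)$,
\[
\Phi(A^q)\Phi(A^p)^2\Phi(A^q)\ \le\ \Phi(A^q)^{2+2p/q}.
\]
Conjugating both sides by $\Phi(A^r)$ gives $|Z|^{2}=\Phi(A^r)\Phi(A^q)\Phi(A^p)^{2}\Phi(A^q)\Phi(A^r)\le M^{*}M$ with $M:=\Phi(A^q)^{1+p/q}\Phi(A^r)$, so by operator monotonicity of the square root $|Z|\le|M|$. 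Since $|M|$ and $|M^{*}|$ have the same eigenvalues, it suffices to bound $|M^{*}|=\bigl(\Phi(A^q)^{1+p/q}\Phi(A^r)^{2}\Phi(A^q)^{1+p/q}\bigr)^{1/2}$.

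For $|M^{*}|$ I would apply Furuta's inequality to $X:=\Phi(A^q)^{r/q}$ and $Y:=\Phi(A^r)$, which satisfy $X\ge Y\ge0$ by Choi's inequality (since $r/q\le1$), with $\alpha=2$, $\beta=(p+q)/r$, $\gamma=2$; the constraint $\gamma\ge(\alpha+2\beta)/(1+2\beta)$ holds since the right side is $\le2$. Evaluating the exponents, Furuta's conclusion reads
\[
\Phi(A^q)^{(p+q+r)/q}\ \ge\ \bigl(\Phi(A^q)^{1+p/q}\Phi(A^r)^{2}\Phi(A^q)^{1+p/q}\bigr)^{1/2}=|M^{*}|.
\]
Hence $|Z|\preceq|M|=|M^{*}|\le\Phi(A^q)^{(p+q+r)/q}$, and it remains to prove $\Phi(A^q)^{v}\preceq\Phi(A^{vq})$ with $v:=(p+q+r)/q=1+(p+r)/q\le\tfrac52$. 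If $v\le2$ this is Choi's inequality for the operator convex function $t^{v}$. If $2<v\le\tfrac52$, write $v=2w$ with $w\in(1,2]$: Choi gives $\Phi(A^q)^{w}\le\Phi(A^{wq})$, the min-max consequence applied to $t\mapsto t^{2}$ gives $\Phi(A^q)^{2w}\preceq\Phi(A^{wq})^{2}$, and Kadison gives $\Phi(A^{wq})^{2}\le\Phi(A^{2wq})=\Phi(A^{vq})$. In all cases $|Z|\preceq\Phi(A^{p+q+r})$, as required; the case $p>r$ follows by applying this to $Z^{*}$.

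The step I expect to demand the most thought is the first one. It is tempting to control the inner block $\Phi(A^q)\Phi(A^p)^{2}\Phi(A^q)=|\Phi(A^p)\Phi(A^q)|^{2}$ via Theorem~\ref{kad} or its refinement $\Phi(A^q)^{1+p/q}\ge|\Phi(A^p)\Phi(A^q)|$, but those only furnish a $\preceq$-estimate, and $\preceq$ is destroyed by the ensuing conjugation by the non-unitary $\Phi(A^r)$ (conjugation by a non-unitary does not respect the eigenvalue ordering). The argument goes through because Kadison's and Choi's inequalities provide an honest $\le$, namely $\Phi(A^q)\Phi(A^p)^{2}\Phi(A^q)\le\Phi(A^q)^{2+2p/q}$, which is available precisely under $2p\le q$ — and this is exactly where the hypothesis $\min\{p,r\}\le q/2$ is used. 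Once this is in hand, the only heavy ingredient is Furuta's inequality (already the engine behind Theorem~\ref{kad}), the rest being routine manipulation of eigenvalues and the operator monotone square root.
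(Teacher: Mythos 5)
Your proof is correct; every step checks out (the Furuta exponents $X=\Phi(A^q)^{r/q}$, $Y=\Phi(A^r)$, $\alpha=2$, $\beta=(p+q)/r$, $\gamma=2$ do produce $X^\beta=\Phi(A^q)^{1+p/q}$ and $X^{(\alpha+2\beta)/\gamma}=\Phi(A^q)^{(p+q+r)/q}$, and the constraint $(2+2\beta)/(1+2\beta)\le 2$ is automatic). The overall strategy coincides with the paper's: use the hypothesis $\min\{p,r\}\le q/2$ together with Kadison and Choi to eliminate the small-exponent factor, then invoke Furuta for the remaining two-factor product, and finish with the ``byproduct of Choi'' to pass from $\Phi(A^q)^{v}$ to $\Phi(A^{vq})$ for $v\le 5/2$. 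Where you genuinely diverge is in the elimination step. The paper (normalizing $q=1$, $r\le 1/2$) writes $\Phi(A^r)=\Phi(A)^rK$ with $K$ a contraction, obtained from $\Phi(A^r)^2\le\Phi(A^{2r})\le\Phi(A)^{2r}$, and uses that multiplying by a contraction does not increase singular values to reduce to $|\Phi(A^p)\Phi(A)^{1+r}|$, to which the unitary-congruence form of Furuta is applied with $X=\Phi(A)^p\ge Y=\Phi(A^p)$. You instead dominate the middle block honestly, $\Phi(A^q)\Phi(A^p)^2\Phi(A^q)\le\Phi(A^q)^{2+2p/q}$, conjugate by $\Phi(A^r)$, and pass to the adjoint. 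Your route has the small advantage of working with genuine operator inequalities up to the single point where eigenvalues of $|M|$ and $|M^*|$ are exchanged, and your closing remark correctly identifies why a naive use of Theorem~\ref{kad} for the inner block would fail (a $\preceq$-estimate does not survive conjugation by the non-unitary $\Phi(A^r)$); the paper's contraction factorization is the other standard way around that same obstacle. The only cosmetic caveat is that your degenerate cases ($p=0$ or $r=0$) are dispatched a bit tersely, but they do follow from Theorem~\ref{kad} and the fact that multiplication by the contraction $\Phi(I)$ decreases singular values.
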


\begin{proof} We may assume $q=1$ and $r\le
1/2$. We then have
$$
\Phi(A^r)^2\le \Phi(A^{2r}) \le \Phi(A)^{2r}
$$
so that $\Phi(A^r) =\Phi(A)^rK$ for a contraction $K$. Hence, for
some unitary $U$,
\begin{equation*}
|\Phi(A^p)\Phi(A)\Phi(A^r)| \le U|\Phi(A^p)\Phi(A)^{1+r}|U^*
\end{equation*}
so
\begin{equation}
|\Phi(A^p)\Phi(A)\Phi(A^r)|\le
U\left(\{\Phi(A)\}^{1+r}\{\Phi(A^p)\}^2\{\Phi(A)\}^{1+r}\right)^{\frac{1}{2}}U^*.
\end{equation}
Now, observe that a byproduct of Furuta's inequality is:

{\it If $X\ge Y\ge 0$ and, $\a,\,\b\ge 0$, then for some unitary
$W$,
$$
X^{\a+2\b}\ge W(X^{\b}Y^{\a}X^{\b})W^*.
$$
} Applying this inequality to $X=\Phi(A)^p$ and $Y=\Phi(A^p)$ with
$\a=2, \b=(1+r)/p$ and combining with (1.3) yields
\begin{equation*}
|\Phi(A^p)\Phi(A)\Phi(A^r)|\le V_0(\Phi(A)^{1+p+r})V_0^*
\end{equation*}
for some unitary $V_0$. Since, by a byproduct of Choi's inequality, we also have some unitary $V_1$ such that
\begin{equation*}
\Phi(A)^{1+p+r}\le V_1\Phi(A^{1+p+r})V_1^*,
\end{equation*}
we get the conclusion.
\end{proof}

 At the cost of one more unitary congruence, assumptions of
Proposition 1.3 can be relaxed. We will use an inequality of Bhatia
and Kittaneh (see \cite{Bha} for an elementary proof) : For all $A,\,B$
in some finite von Neumann algebra $\M$, there is some unitary $U\in
\M$ such that
$$
|AB^*|\le U\frac{|A|^2+|B|^2}{2}U^*.
$$

\begin{prop} Let $A\ge 0$ in $\A$ and let $
p,q,r\ge 0$ with $q\ge p,r$. Then, for some unitaries $U,\,V$ in
$\Mn$,
$$
|\Phi(A^p)\Phi(A^q)\Phi(A^r)| \le
\frac{U\Phi(A^{p+q+r})U^*+V\Phi(A^{p+q+r})V^*}{2}.
$$
\end{prop}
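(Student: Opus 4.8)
The plan is to replay, almost verbatim, the chain of Choi/Furuta unitary congruences used for Proposition 1.3, but to first split the triple product in the middle of $\Phi(A^q)$ by means of the Bhatia--Kittaneh inequality quoted above; this splitting is exactly what manufactures the arithmetic mean of two congruences and lets us drop the hypothesis $\min\{p,r\}\le q/2$. By homogeneity --- replacing $A$ by $A^q$ and $(p,q,r)$ by $(p/q,1,r/q)$ --- I may assume $q=1$, hence $0\le p,r\le1$. I may further assume $p,r>0$: if $q=0$ the inequality is trivial, and if $\min\{p,r\}=0$ then $\min\{p,r\}\le q/2$, so Proposition 1.3 applies and already yields the stronger single-congruence bound.

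Fix $q=1$ and put $a=(1+r-p)/2$. Since $0<p,r\le1$, both $a$ and $1-a=(1+p-r)/2$ lie in $(0,1)$; this particular value of $a$ is the one for which the two terms below both carry total exponent $p+1+r$. With $X=\Phi(A^p)\Phi(A)^{a}$ and $Y=\Phi(A^r)\Phi(A)^{1-a}$ one has
$$\Phi(A^p)\Phi(A)\Phi(A^r)=XY^{*},\qquad |X|^2=\Phi(A)^{a}\Phi(A^p)^2\Phi(A)^{a},\qquad |Y|^2=\Phi(A)^{1-a}\Phi(A^r)^2\Phi(A)^{1-a},$$
so the Bhatia--Kittaneh inequality furnishes a unitary $W\in\Mn$ with $|\Phi(A^p)\Phi(A)\Phi(A^r)|\le W\frac{|X|^2+|Y|^2}{2}W^{*}$.

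It remains to dominate each of $|X|^2$ and $|Y|^2$ by a unitary congruence of $\Phi(A^{p+1+r})$; by the symmetry $p\leftrightarrow r$ (which swaps $X$ and $Y$ and exchanges $a$ and $1-a$) it is enough to treat $|X|^2$. By Choi's inequality $\Phi(A)^p\ge\Phi(A^p)\ge0$, so applying the byproduct of Furuta's inequality noted in the proof of Proposition 1.3 with $X_0=\Phi(A)^p$, $Y_0=\Phi(A^p)$, $\alpha=2$ and $\beta=a/p>0$ --- for which $X_0^{\beta}=\Phi(A)^{a}$ and $X_0^{\alpha+2\beta}=\Phi(A)^{2p+2a}=\Phi(A)^{p+1+r}$ --- gives a unitary $W_0$ with $|X|^2\le W_0^{*}\Phi(A)^{p+1+r}W_0$, and similarly $|Y|^2\le W_1^{*}\Phi(A)^{p+1+r}W_1$. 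The byproduct of Choi's inequality already invoked in Proposition 1.3 (legitimate since $p+1+r\ge1$) provides a unitary $Z$ with $\Phi(A)^{p+1+r}\le Z\Phi(A^{p+1+r})Z^{*}$. Feeding these into the Bhatia--Kittaneh bound and setting $U:=WW_0^{*}Z$ and $V:=WW_1^{*}Z$ yields $|\Phi(A^p)\Phi(A)\Phi(A^r)|\le\frac12\bigl(U\Phi(A^{p+1+r})U^{*}+V\Phi(A^{p+1+r})V^{*}\bigr)$, and undoing the normalization $q=1$ finishes the proof.

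The one step requiring an idea rather than bookkeeping is the choice of the splitting exponent $a=(q+r-p)/2$: it is the unique value making both halves of the Bhatia--Kittaneh estimate carry weight $p+q+r$, so that each can be raised to a congruence of $\Phi(A^{p+q+r})$ via Furuta's byproduct, and the hypothesis $\max\{p,r\}\le q$ is precisely what forces $a$ into the admissible range $[0,q]$. Everything after the split is the routine Choi/Furuta bookkeeping of Proposition 1.3.
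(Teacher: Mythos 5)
Your proof is correct and follows essentially the same route as the paper: the same Bhatia--Kittaneh splitting of $\Phi(A^q)$ at the exponent $a=(q+r-p)/2$, followed by the same Furuta and Choi byproducts applied to each summand. The only difference is that you dispose of the degenerate cases $q=0$ and $\min\{p,r\}=0$ explicitly (via Proposition 1.3), which the paper leaves implicit.
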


\begin{proof} We may assume $q=1$. Let
$\a\in[0,1]$ and note that by Bhatia-Kittaneh's inequality,
\begin{align}
|\Phi(A^p)\Phi(A)\Phi(A^r)| &= |\Phi(A^p)\Phi(A)^{\a}\cdot\Phi(A)^{1-\a}\Phi(A^r)| \notag \\
&\le
W\frac{|\Phi(A^p)\Phi(A)^{\a}|^2+|\Phi(A^r)\Phi(A)^{1-\a}|^2}{2}W^*
\end{align}
for some unitary $W$.  Then set $\a=(r-p+1)/2$ (hence $0\leq\a\leq
 1$). We may estimate each summand in (1.4) via Furuta's inequality,
 since $\Phi(A^p)\le \Phi(A)^p$ and $\Phi(A^r)\le \Phi(A)^r$. For the
 first summand, there are some unitaries $W_0$ and $W_1$ such that
 \begin{align}
 |\Phi(A^p)\Phi(A)^{\a}|^2 &=
 \{\Phi(A)^p\}^{\frac{r-p+1}{2p}}\{\Phi(A^p)\}^2
 \{\Phi(A)^p\}^{\frac{r-p+1}{2p}}\notag \\
 &\le W_0\Phi(A)^{1+p+r}W^*_0 \notag \\
  &\le W_1\Phi(A^{1+p+r})W_1^*
  \end{align}
  where the last step follows from Choi's inequality. We also have a unitary
  $W_2$ such that
  \begin{equation}
  |\Phi(A^r)\Phi(A)^{1-\a}|^2 \le W_2\Phi(A^{1+p+r})W_2^*
  \end{equation}
and combining (1.4), (1.5) and (1.6) completes the proof.
\end{proof}

\section{Matrix monotony inequalities}

\medskip

 Here we try to understand the results of the first section, using the
more general notion of a monotone pair. Recall that $(A,B)$ is said
to be a monotone pair in $\Mn$ if there exist a positive element
$C\in \Mn$ and two non-negative, non-decreasing functions $f$ and
$g$ so that $A=f(C)$ and $B=g(C)$. A typical example is $(A^p,A^q)$
for $A\geq 0$ and $p,\, q\geq 0$.

For technical reasons, we have to stick to $\Mn$, as many arguments
rely on the min-max principle. For instance, we use the following
result of
 \cite{Bo1} which compares the singular values of $AEB$ and $ABE$ for some
projections $E$.

 \begin{theorem}
   Let $(A,B)$ be a monotone pair  and let $E$ be a self-adjoint projection.
 Then, for some unitary $V$,
$$
|AEB| \le V|ABE|V^*.
$$
\end{theorem}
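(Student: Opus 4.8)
The plan is to reduce the statement to a spectral/eigenvalue inequality by exploiting the monotone-pair structure. First I would write $A=f(C)$ and $B=g(C)$ for some $C\ge 0$ and non-decreasing, non-negative $f,g$, and recall that the assertion $|AEB|\le V|ABE|V^*$ is equivalent to the system of inequalities between singular values, $s_k(AEB)\le s_k(ABE)$ for all $k$, since $\Mn$ is finite-dimensional. Thus it suffices to prove $s_k(f(C)Eg(C))\le s_k(f(C)g(C)E)$ for every $k$. Since $f(C)g(C)=(fg)(C)$ commutes as a function of $C$, the right-hand side is just $s_k\bigl((fg)(C)E\bigr)=s_k\bigl(Ef(C)g(C)\bigr)$; so the inequality to establish is that inserting the projection $E$ between $f(C)$ and $g(C)$ can only decrease each singular value, compared with multiplying by $E$ on the outside.

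Next I would invoke the min-max (Ky Fan / Courant–Fischer) characterization of partial sums or of individual singular values. Concretely, $s_k(f(C)Eg(C))$ equals the minimum over codimension-$(k-1)$ subspaces $\mathcal{S}$ of $\max_{x\in\mathcal{S},\|x\|=1}\|Eg(C)x\|\cdot(\text{something})$ — more robustly, I would use that for any operators $s_k(XY)\le \|X\|_{\mathrm{op on range}}\, s_k(Y)$ type bounds are too crude, so instead the right tool is a direct variational argument: bound $\langle f(C)Eg(C)x, y\rangle$ for unit vectors and compare against the corresponding quantity with $E$ outside. The cleanest route is probably to pass to the spectral decomposition of $C$, say $C=\sum_i \mu_i P_i$ with $\mu_1\ge \mu_2\ge\cdots$, so that $f(C)$ and $g(C)$ are diagonal in the same eigenbasis with non-increasing (since $f,g$ non-decreasing) entries $f(\mu_i),g(\mu_i)$. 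Then $AEB$ and $ABE$ become $\mathrm{diag}(f(\mu_i))\cdot E\cdot \mathrm{diag}(g(\mu_i))$ and $\mathrm{diag}(f(\mu_i)g(\mu_i))\cdot E$, and the claim is a statement purely about a positive semidefinite matrix $E$ sandwiched by two commuting diagonal matrices with similarly-ordered entries — this is exactly a matrix Chebyshev-type rearrangement phenomenon.

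At this point I would apply a pinching/averaging argument in the spirit of the monotone-pair literature: write $f(\mu_i)=\int_0^\infty \mathbf 1_{\{f(\mu_i)>t\}}\,dt$ so that $f(C)=\int_0^\infty \mathbf 1_{\{C>c_t\}}\,dt$ for suitable thresholds $c_t$, i.e. $f(C)$ is an integral average of spectral projections $E_t$ of $C$ (the ``layer-cake'' decomposition), and similarly for $g(C)$. Because $f,g$ are both non-decreasing, the spectral projections appearing for $f(C)$ and for $g(C)$ form a single nested chain $\{E_t\}$. Then $|AEB|$ is controlled, via the triangle inequality for singular values (Ky Fan norms) together with a unitary congruence, by integrals of $|E_t E E_s|$ with $E_t,E_s$ comparable projections, and one uses that for nested projections $E_t\le E_s$ one has $|E_t E E_s|$ unitarily dominated by $|E_t E_s E|=|E_t E|$ — the key single-projection, single-chain case. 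The main obstacle I expect is precisely this last combinatorial/spectral core: controlling $|E_t E E_s|$ by $|E_s E_t E|$ uniformly and checking that the integral (or finite-sum) recombination respects the unitary-congruence ordering $\le$; handling the integrals cleanly may require discretizing $f,g$ as step functions first, proving the finite case by an explicit induction on the number of steps, and then passing to the limit. If that core lemma is already available from \cite{Bo1} for a single pair of comparable projections, the theorem follows by assembling these pieces; otherwise that is where the real work lies.
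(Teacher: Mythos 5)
The paper itself offers no proof of this statement: Theorem 2.1 is quoted from \cite{Bo1} and used as a black box, so there is no internal proof to compare against and your argument has to stand on its own.

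Your opening reduction is fine: since everything is finite dimensional, $|AEB|\le V|ABE|V^*$ for some unitary $V$ is indeed equivalent to the full family of singular value inequalities $s_k(AEB)\le s_k(ABE)$ for every $k$, and the ``core case'' of two comparable spectral projections of $C$ is indeed easy (if $P\le Q$ then $PQE=PE$ and $s_k(PEQ)\le s_k(PE)\,s_1(Q)\le s_k(PQE)$). The genuine gap is the recombination step, and it fails for a structural reason rather than a technical one. After the layer-cake discretization $A=\sum_i a_iR_i$ and $B=\sum_j b_jS_j$, with $a_i,b_j\ge 0$ and all the $R_i,S_j$ lying in one nested chain of spectral projections of $C$, you must pass from the termwise bounds $s_k(R_iES_j)\le s_k(R_iS_jE)$ to $s_k\bigl(\sum_{i,j} a_ib_jR_iES_j\bigr)\le s_k\bigl(\sum_{i,j} a_ib_jR_iS_jE\bigr)$. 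Any subadditivity you invoke (Ky Fan norms or otherwise) controls the left-hand sum from above by $\sum_{i,j} a_ib_j\,s_k(R_iES_j)$, but to finish you would then need the reverse inequality $\sum_{i,j} a_ib_j\,s_k(R_iS_jE)\le s_k\bigl(\sum_{i,j} a_ib_jR_iS_jE\bigr)$, i.e.\ superadditivity of $s_k$ (or of the Ky Fan norms) on the right-hand side, which is false: the target quantity sits on the \emph{large} side of the inequality as a sum, and norms are subadditive there too, so the estimate points the wrong way. No induction on the number of steps repairs this. Moreover, even if the recombination could be forced through with Ky Fan norms, it would only yield the weak majorization $\sum_{j\le k}s_j(AEB)\le\sum_{j\le k}s_j(ABE)$, which is strictly weaker than the asserted unitary congruence $|AEB|\le V|ABE|V^*$ (equivalent to the individual inequalities $s_k(AEB)\le s_k(ABE)$, which weak majorization does not imply). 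So you have correctly isolated the trivial sub-case, but the real work is exactly the assembly you defer to the end, and it cannot be carried out along these lines; a correct proof has to treat $f(C)$ and $g(C)$ globally --- for instance by a direct min--max argument on each $s_k$ using the simultaneous functional calculus --- rather than by decomposing $f$ and $g$ into indicator functions and summing.
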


\medskip

As consequences, we have the following  Chebyshev's type eigenvalue
inequalities for compressions  \cite{Bo1},
$$
\lambda_j[(EAE)(EBE)]\le
\lambda_j[EABE]
$$
and
\begin{equation}
\lambda_j[(EAE)(EBE)(EAE)]\le \lambda_j[EABAE]
\end{equation}
where $\lambda_j[\cdot]$ stands for the list of eigenvalues arranged
in decreasing order with their multiplicities. Let $\Phi:\Mn\to \Md$
be a  unital completely positive (linear) map. It is well known
(Stinespring) that $\Phi$ can be decomposed as $\Phi(A)=E\pi(A)E$,
where $\pi:\Mn\to \mathbb M_{m}$ is a $*$-representation (with
$m\leq n^2d$) and $E\in \mathbb M_{m}$ is a rank $d$ projection (and
identifying
 $E\mathbb M_{m}E$ with $\Md$).
 Taking into account that we start from a commutative $C^*$-algebra, (2.1) is then equivalent to :

\smallskip

 \begin{cor}
Let $(A,B)$ be a  monotone pair in $\Mn$ and let $\Phi:\Mn\to \Md$
 be a unital positive map. Then, for some  unitary $V\in \Md$,
$$
\Phi(A)\Phi(B)\Phi(A) \le V\Phi(ABA)V^*.
$$
\end{cor}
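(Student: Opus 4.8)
The plan is to reduce the statement to the eigenvalue inequality (2.1) via a Stinespring dilation, exactly as the paragraph preceding the corollary suggests, and then to convert the resulting domination of eigenvalue lists into an operator inequality up to a unitary congruence.

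First I would pass to the commutative subalgebra. Write $A=f(C)$ and $B=g(C)$ with $C\ge 0$ in $\Mn$ and $f,g\ge 0$ non-decreasing, and let $\mathcal C\subseteq\Mn$ be the unital $C^*$-algebra generated by $C$. Then $A,B\in\mathcal C$, and moreover $ABA=f(C)g(C)f(C)=(f^2g)(C)\in\mathcal C$ (in particular $ABA\ge 0$). The restriction $\Phi|_{\mathcal C}$ is a unital positive map on a \emph{commutative} $C^*$-algebra, hence completely positive; by Stinespring's theorem there are a finite-dimensional Hilbert space $\mathcal K$, a unital $*$-representation $\pi:\mathcal C\to B(\mathcal K)$, and a projection $E\in B(\mathcal K)$ of rank $d$ such that, after identifying $EB(\mathcal K)E$ with $\Md$, one has $\Phi(X)=E\pi(X)E$ for all $X\in\mathcal C$. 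Put $m=\dim\mathcal K$, so $B(\mathcal K)\cong\mathbb M_m$.

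Next I would check that $(\pi(A),\pi(B))$ is a monotone pair in $\mathbb M_m$: since $\pi$ is a unital $*$-homomorphism it commutes with continuous functional calculus, so $\pi(A)=f(\pi(C))$ and $\pi(B)=g(\pi(C))$ with $\pi(C)\ge 0$, which is precisely the definition. Applying (2.1) — valid in any matrix algebra, here $\mathbb M_m$ — to this pair and to the projection $E$ gives, for all $j$,
$$\lambda_j\bigl[(E\pi(A)E)(E\pi(B)E)(E\pi(A)E)\bigr]\le\lambda_j\bigl[E\,\pi(A)\pi(B)\pi(A)\,E\bigr].$$
Under the identification above the left-hand side is $\lambda_j[\Phi(A)\Phi(B)\Phi(A)]$, while $\pi(A)\pi(B)\pi(A)=\pi(ABA)$ by multiplicativity of $\pi$, so the right-hand side is $\lambda_j[\Phi(ABA)]$. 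Hence $\lambda_j[\Phi(A)\Phi(B)\Phi(A)]\le\lambda_j[\Phi(ABA)]$ for every $j$.

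Finally, both $\Phi(A)\Phi(B)\Phi(A)$ and $\Phi(ABA)$ are positive semidefinite elements of $\Md$, and domination of the decreasingly ordered eigenvalue lists of two self-adjoint matrices of the same size is equivalent, by an easy consequence of the min-max principle, to the existence of a unitary conjugating one below the other (diagonalize each and compare diagonal parts). This yields $\Phi(A)\Phi(B)\Phi(A)\le V\Phi(ABA)V^*$ for some unitary $V\in\Md$, as claimed. The only genuinely delicate point is the bookkeeping in the Stinespring step: keeping track of the identification $EB(\mathcal K)E\cong\Md$ so that the product of compressions $(E\pi(A)E)(E\pi(B)E)(E\pi(A)E)$ computed in that corner really matches $\Phi(A)\Phi(B)\Phi(A)$, together with the observation that $\pi$ transports the monotone-pair structure; once these are spelled out, all the substantive content is already contained in Theorem 2.1 / inequality (2.1), which is why the corollary is essentially a reformulation of it.
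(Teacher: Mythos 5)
Your proposal is correct and follows essentially the same route as the paper: the authors also pass to the commutative algebra generated by $C$ so that $\Phi$ is automatically completely positive, invoke Stinespring to write $\Phi(X)=E\pi(X)E$, apply the eigenvalue inequality (2.1) to the monotone pair $(\pi(A),\pi(B))$ and the projection $E$, and convert the resulting eigenvalue domination into a unitary congruence. Your write-up simply makes explicit the bookkeeping that the paper leaves implicit.
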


\smallskip

In the case of pairs of positive powers $(A^p,A^q)$, such results
are easy consequences of Furuta's inequality. To apply Corollary 2.2
we define a special class of monotone pairs (of positive operators).

\medskip\noindent {\bf Definition.} A monotone pair $(A,B)$ is concave if
$A=h(B)$ for some concave function $h:[0,\infty)\to [0,\infty)$.

\medskip This class contains pairs of powers $(A^p,A^q)$ with $0\le
p\le q$ and we note that Corollaries 2.4-2.5 below are variations of
Theorem 1.1. We first state a factorization result.

 \begin{theorem}
Let $(A,B)$ be a concave monotone pair in $\Mn$ and let $\Phi:\Mn\to
\Md$ be a unital positive map. Then, for some contraction $K$ and
unitary $U$ in $\Md$,
$$
\Phi(B)\Phi(A) = \sqrt{\Phi(AB)}K\sqrt{\Phi(AB)}U.
$$
\end{theorem}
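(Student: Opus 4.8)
The plan is to reduce the factorization to the singular-value inequality of Theorem 2.1 together with the Chebyshev-type Corollary 2.2, and then convert the resulting inequality on positive operators into the claimed polar-type factorization. First I would apply Corollary 2.2 to the concave monotone pair $(A,B)$: it gives a unitary $V\in\Md$ with $\Phi(A)\Phi(B)\Phi(A)\le V\Phi(ABA)V^*$. Since $(A,B)$ is monotone, $AB=BA$, so $ABA=A^2B$ and also $\Phi(ABA)$ is related to $\Phi(AB)$; the concavity hypothesis $A=h(B)$ with $h$ concave is exactly what is needed so that the pair $(A^2,B)$ (equivalently $(A, AB)$) remains a monotone pair, because $t\mapsto h(t)^2$ need not be monotone in general but $t\mapsto h(t)\cdot(\text{id})$ handled through $AB$ stays non-decreasing in the common variable $C$. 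The upshot I would aim for is a comparison $|\Phi(B)\Phi(A)|^2=\Phi(A)\Phi(B)^2\Phi(A)\le W\Phi(ABA)W^*$ combined with a matching lower-type control, so that $|\Phi(B)\Phi(A)|$ and $\sqrt{\Phi(AB)}$ have the same singular values up to a unitary.

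More concretely, the key algebraic identity to target is: for any $Z\in\Md$ one has $Z=|Z^*|^{1/2}\,K\,|Z^*|^{1/2}$ for a suitable contraction $K$ whenever $\|Z\|$ is controlled by $\||Z^*|\|$ appropriately — but the clean route is to write $Z=\sqrt{T}\,K\,\sqrt{T}\,U$ with $T=\Phi(AB)$, which holds for some contraction $K$ and unitary $U$ precisely when $Z^*Z\le U_1 T U_1^*$ and $ZZ^*\le U_2 T U_2^*$ for unitaries $U_i$, i.e. when all singular values of $Z$ are dominated by those of $T$. So the heart of the proof is the two-sided singular value domination
\begin{equation*}
\lambda_j\bigl(|\Phi(B)\Phi(A)|\bigr)\le \lambda_j\bigl(\Phi(AB)\bigr)
\end{equation*}
for all $j$, which I would extract from Theorem 2.1 (applied with the projection $E$ coming from the Stinespring dilation $\Phi(\cdot)=E\pi(\cdot)E$) exactly as Corollary 2.2 is extracted, but keeping track of the non-self-adjoint product $\pi(B)E\pi(A)$ rather than the symmetrized triple product. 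Here I must use that $(\pi(A),\pi(B))$ is again a monotone pair in $\mathbb M_m$ (representations preserve the functional calculus), and that $\pi(AB)=\pi(A)\pi(B)=\pi(B)\pi(A)$.

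I expect the main obstacle to be the passage from the singular-value domination to the precise factored form $\sqrt{\Phi(AB)}\,K\,\sqrt{\Phi(AB)}\,U$ with a single contraction $K$ sandwiched symmetrically — as opposed to the weaker $K_1\sqrt{\Phi(AB)}\,U$ or $\sqrt{\Phi(AB)}\,K_1\,V$ that follows immediately. Getting the symmetric sandwich requires knowing that $\Phi(B)\Phi(A)\Phi(B)^* $-type and $\Phi(A)^*\Phi(B)^*\Phi(B)\Phi(A)$-type quantities are both controlled by $\Phi(AB)$ up to unitary conjugation, i.e. using Theorem 2.1 in both orders; the concavity hypothesis on the pair is what guarantees both applications are legitimate (the pair $(A,AB)$ and the pair $(B,AB)$ are each monotone). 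Once both one-sided dominations are in hand, a standard lemma on operator products (of the type: if $X^*X$ and $XX^*$ are each $\le$ a unitary conjugate of $T^2$, then $X=\sqrt{T}K\sqrt{T}U$ for a contraction $K$ and unitary $U$) finishes the argument; I would either cite this or prove it in two lines via the polar decomposition of $T^{-1/2}XT^{-1/2}$ after a limiting argument to handle the possibly non-invertible $T$.
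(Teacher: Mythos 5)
There is a genuine gap, and it sits exactly where you anticipated the "main obstacle": the final conversion lemma you invoke is false. Singular value domination $s_j(Z)\le\lambda_j(T)$ for all $j$ does \emph{not} imply a factorization $Z=\sqrt{T}K\sqrt{T}U$ with $K$ a contraction and $U$ unitary; the point of Theorem 2.3 is precisely that the left factor $\sqrt{\Phi(AB)}$ is \emph{not} conjugated by a unitary, and this is strictly more than singular-value information. Concretely, take $T=\mathrm{diag}(2,1)$ and $Z$ with $|Z^*|=\mathrm{diag}(1,2)$, so that $s_j(Z)=\lambda_j(T)$ for $j=1,2$. Writing $Z=|Z^*|W$ with $W$ unitary, the desired factorization exists iff $T^{-1/2}|Z^*|VT^{-1/2}$ is a contraction for some unitary $V=\begin{pmatrix}a&b\\c&d\end{pmatrix}$; but its $(2,2)$ and $(2,1)$ entries are $2d$ and $\sqrt2\,c$, forcing $|d|\le 1/2$ and $|c|\le 1/\sqrt2$, which contradicts $|c|^2+|d|^2=1$. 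So no amount of two-sided singular-value control recovers the asymmetric normalization of the two $\sqrt{\Phi(AB)}$ factors. There is a secondary problem upstream as well: the input $\lambda_j[|\Phi(B)\Phi(A)|]\le\lambda_j[\Phi(AB)]$ is not available. Running Theorem 2.1/Corollary 2.2 as you sketch gives $\lambda_j[\Phi(A)\Phi(B)^2\Phi(A)]\le\lambda_j[\Phi(AB^2A)]=\lambda_j[\Phi((AB)^2)]$, and Kadison's inequality $\Phi((AB)^2)\ge\Phi(AB)^2$ goes the wrong way; the paper itself only obtains the weaker weak log-majorization (Corollary 2.5), and obtains it \emph{as a consequence of} Theorem 2.3, not as an ingredient.

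The paper's proof avoids singular values entirely and works with a positive $2\times2$ block matrix. Since $A$ and $B$ commute and $A$ may be taken invertible by continuity, $\begin{pmatrix}AB&B\\B&BA^{-1}\end{pmatrix}\ge0$; applying $\Phi$ entrywise (after composing with a conditional expectation so that it is completely positive) and conjugating by $\mathrm{diag}(I,\Phi(A))$ yields a positive block matrix with diagonal corners $\Phi(AB)$ and $\Phi(A)\Phi(BA^{-1})\Phi(A)$ and off-diagonal entry $\Phi(B)\Phi(A)$. The concavity hypothesis is used at exactly one place, and not where you place it: it guarantees that $(A,BA^{-1})$ is a monotone pair (if $A=h(B)$ with $h$ concave, both $h(t)$ and $t/h(t)$ are non-decreasing), so Corollary 2.2 replaces the lower-right corner by $U^*\Phi(AB)U$. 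The factorization then falls out of the standard description of the off-diagonal block of a positive operator matrix, $Z=P^{1/2}LQ^{1/2}$ with $L$ a contraction. That block-matrix positivity is the extra structural information your singular-value route discards and cannot reconstruct.
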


\smallskip

 \begin{proof} By a continuity argument we may assume that $A$ is
 invertible, hence
 \begin{equation*}
 \begin{pmatrix}
 AB & B\\ B & BA^{-1}
 \end{pmatrix}
 \ge 0.
 \end{equation*}
 Replacing $\Phi$ by  $\Phi\circ \mathbb E$, where $\mathbb E$ is the conditional
expectation onto the $C^*$-algebra generated by $A$ and $B$, we can
assume that $\Phi$ is completely positive so that we get
 \begin{equation*}
 \begin{pmatrix}
 \Phi(AB) & \Phi(B)\\ \Phi(B) & \Phi(BA^{-1})
 \end{pmatrix}
 \ge 0,
 \end{equation*}
 equivalently,
  \begin{equation*}
 \begin{pmatrix}
 \Phi(AB) & \Phi(B)\Phi(A)
 \\ \Phi(A)\Phi(B) & \Phi(A)\Phi(BA^{-1})\Phi(A)
 \end{pmatrix}
 \ge 0.
 \end{equation*}
 The concavity assumption on $(A,B)$ implies that $(A, BA^{-1})$ is
 a monotone pair, indeed both $h(t)$ and $t/h(t)$ are non-decreasing. By Corollary 2.2, we then have a unitary $U$ such that
 \begin{equation}
 \begin{pmatrix}
 \Phi(AB) & \Phi(B)\Phi(A)
 \\ \Phi(A)\Phi(B) & U^*\Phi(AB)U
 \end{pmatrix}
 \ge 0,
 \end{equation}
 equivalently, $$\Phi(B)\Phi(A)=
 \sqrt{\Phi(AB)}LU^*\sqrt{\Phi(AB)}U$$ for some contraction $L$.
 \end{proof}

\medskip

 Theorem 2.3 is equivalent to positivity of the
block-matrix (2.2). Considering the polar decomposition
$\Phi(A)\Phi(B)=W|\Phi(A)\Phi(B)|$ we  infer
\begin{equation*}
\begin{pmatrix}
I & -W^*
\end{pmatrix}
  \begin{pmatrix}
 \Phi(AB) & \Phi(B)\Phi(A)
 \\ \Phi(A)\Phi(B) & U^*\Phi(AB)U
 \end{pmatrix}
   \begin{pmatrix}
I \\ -W
\end{pmatrix}\ge 0
\end{equation*}
and thus obtain:

 \begin{cor}
Let $(A,B)$ be a concave monotone pair in $\Mn$ and let $\Phi:\Mn\to
\Md$ be a unital positive map. Then, for some unitary $V\in \Md$,
$$ |\Phi(A)\Phi(B)| \le \frac{\Phi(AB)+V\Phi(AB)V^*}{2}.
$$
\end{cor}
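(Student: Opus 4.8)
The plan is to deduce the inequality directly from the block-matrix positivity established in Theorem 2.3, by compressing it with a congruence coming from the polar decomposition of $\Phi(A)\Phi(B)$.

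First I would recall that Theorem 2.3 is equivalent to the statement that, for a suitable unitary $U\in\Md$,
$$
\begin{pmatrix}
\Phi(AB) & \Phi(B)\Phi(A)\\
\Phi(A)\Phi(B) & U^*\Phi(AB)U
\end{pmatrix}\ge 0 ,
$$
which is precisely display $(2.2)$. Next I would introduce the polar decomposition $\Phi(A)\Phi(B)=W|\Phi(A)\Phi(B)|$. Since we work in the finite-dimensional algebra $\Md$, the partial isometry $W$ may be taken to be a unitary: the initial space of $W$ is the range of $|\Phi(A)\Phi(B)|$, and the two defect spaces $\ker \Phi(A)\Phi(B)$ and $\ker \Phi(B)\Phi(A)$ have the same dimension, so $W$ extends to a unitary without disturbing the identities $\Phi(A)\Phi(B)=W|\Phi(A)\Phi(B)|$, $\ W^*\Phi(A)\Phi(B)=|\Phi(A)\Phi(B)|$ and $\Phi(B)\Phi(A)\,W=|\Phi(A)\Phi(B)|$.

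Then I would compress the displayed $2d\times 2d$ positive matrix by the row $\begin{pmatrix} I & -W^*\end{pmatrix}$ and its adjoint column $\begin{pmatrix} I \\ -W\end{pmatrix}$. Positivity is preserved; expanding the triple product and feeding in the three polar identities above, the off-diagonal and lower-right blocks collapse and one is left with
$$
\Phi(AB)-2\,|\Phi(A)\Phi(B)|+W^*U^*\Phi(AB)UW\ge 0 .
$$
Writing $V:=(UW)^*$, which is a unitary of $\Md$, this rearranges to $|\Phi(A)\Phi(B)|\le \tfrac12\bigl(\Phi(AB)+V\Phi(AB)V^*\bigr)$, which is the claim.

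Everything here is a routine block-matrix manipulation; the one place calling for care — and the only genuine obstacle — is the passage from a partial isometry to a bona fide unitary in the polar decomposition, which is exactly where finite-dimensionality of $\Md$ enters. In an infinite von Neumann algebra one would be forced to keep a partial isometry, much as in Corollary \ref{kadinv}, and could not in general merge $U$ and $W$ into a single unitary $V$.
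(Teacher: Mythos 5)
Your argument is correct and coincides with the paper's own proof: the authors likewise take the positive block matrix (2.2) from Theorem 2.3, compress it by $\begin{pmatrix} I & -W^*\end{pmatrix}$ with $W$ from the polar decomposition of $\Phi(A)\Phi(B)$, and read off the inequality with $V=W^*U^*$. Your extra care about extending the partial isometry to a unitary in $\Md$ is a correct (and implicitly used) detail that the paper leaves unstated.
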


\smallskip
 Recall that a norm is said symmetric whenever $\Vert
UAV\Vert = \Vert A \Vert$ for all $A$ and all unitaries $U,V$.
Corollary 2.4 yields for {\it concave} monotone pairs some
Chebyshev's type inequalities for symmetric norms,
$$
\Vert \Phi(A)\Phi(B) \Vert \le \Vert \Phi(AB) \Vert.
$$ It is not clear that this can be extended to all monotone pairs.
In fact, for concave monotone pairs, Theorem 2.3 entails a stronger
statement. Given $X,Y\ge 0$, recall that the weak log-majorization
relation $X\prec_{\mathrm{ wlog}} Y$ means
$$
\prod_{j\le k} \lambda_j[X] \le \prod_{j\le k} \lambda_j[Y]
$$
for all $k=1,2,\cdots$. This entails $\Vert X\Vert \le \Vert Y\Vert$
for all symmetric norms. Theorem 2.3 and Horn's inequality yield :

\smallskip

 \begin{cor}
Let $(A,B)$ be a concave monotone pair in $\Mn$ and let $\Phi:\Mn\to
\Md$ be a unital, positive linear map. Then,
$$
|\Phi(A)\Phi(B)| \prec_{\mathrm{ wlog}} \Phi(AB).
$$
\end{cor}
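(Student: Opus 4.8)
The plan is to read the factorization off Theorem 2.3 and feed it into Horn's multiplicative inequality for singular values. Since $\Phi$ is positive it preserves adjoints, so $\Phi(A)\Phi(B) = \bigl(\Phi(B)\Phi(A)\bigr)^*$; an operator and its adjoint have the same singular values, hence $|\Phi(A)\Phi(B)|$ and $|\Phi(B)\Phi(A)|$ carry the same list of eigenvalues, and it is enough to prove $|\Phi(B)\Phi(A)| \prec_{\mathrm{wlog}} \Phi(AB)$.

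First I would apply Theorem 2.3 to write $\Phi(B)\Phi(A) = \sqrt{\Phi(AB)}\,K\,\sqrt{\Phi(AB)}\,U$ with $K$ a contraction and $U$ a unitary in $\Md$. For a monotone pair $AB = f(C)g(C) \ge 0$, so $\Phi(AB) \ge 0$, its square root is legitimate, and $\lambda_j[\Phi(AB)] = s_j\bigl(\sqrt{\Phi(AB)}\bigr)^2$. Right multiplication by the unitary $U$ leaves singular values unchanged, so with $S := \sqrt{\Phi(AB)}$ one has $s_j\bigl(\Phi(B)\Phi(A)\bigr) = s_j(SKS)$ for every $j$.

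Then, fixing $k$, I would use Horn's inequality $\prod_{j\le k} s_j(XY) \le \bigl(\prod_{j\le k}s_j(X)\bigr)\bigl(\prod_{j\le k}s_j(Y)\bigr)$ twice — once with $(X,Y) = (S, KS)$ and once with $(X,Y) = (K,S)$ — together with $\prod_{j\le k} s_j(K) \le 1$, to obtain $\prod_{j\le k} s_j(SKS) \le \bigl(\prod_{j\le k}s_j(S)\bigr)^2 = \prod_{j\le k}\lambda_j[\Phi(AB)]$. Combined with the reduction above this reads $\prod_{j\le k}\lambda_j\bigl[|\Phi(A)\Phi(B)|\bigr] \le \prod_{j\le k}\lambda_j[\Phi(AB)]$ for all $k$, which is exactly the asserted weak log-majorization.

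There is no serious obstacle here; the statement is essentially a repackaging of Theorem 2.3, and the computation of the third paragraph can, if one wishes, be isolated as the elementary fact that $|SKS| \prec_{\mathrm{wlog}} S^2$ for $S\ge 0$ and $K$ a contraction. The only points that need a word of care are the passage from $\Phi(A)\Phi(B)$ to $\Phi(B)\Phi(A)$ through adjoints (so that singular values, equivalently the eigenvalues of the modulus, are preserved), the positivity of $\Phi(AB)$ that makes the square root and the identity $\lambda_j = s_j^2$ available, and applying Horn's inequality to the genuine products $\prod_{j\le k}$ rather than term by term.
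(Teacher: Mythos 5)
Your argument is correct and is exactly the route the paper intends: the paper gives no separate proof of this corollary beyond the phrase ``Theorem 2.3 and Horn's inequality yield,'' and your write-up (factorization $\Phi(B)\Phi(A)=\sqrt{\Phi(AB)}\,K\sqrt{\Phi(AB)}\,U$, two applications of Horn's multiplicative inequality, $\prod_{j\le k}s_j(K)\le 1$, and the adjoint trick to pass from $\Phi(B)\Phi(A)$ to $\Phi(A)\Phi(B)$) simply fills in those details correctly.
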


\medskip

 In case of pairs $(A^p,A^q)$ we have more :

\begin{prop} Let $A\ge 0$ in $\Mn$, let $p,\, q\ge 0$
and let $\Phi$ as above. Then, for all eigenvalues,
$$
\lambda_j[\Phi(A^p)]\,\lambda_j[\Phi(A^q)] \le
\lambda_j[\Phi(A^{p+q})].
$$
\end{prop}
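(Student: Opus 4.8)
The plan is to avoid the operator inequality of Theorem \ref{kad} (and hence Furuta's inequality) altogether, and to read off the pointwise eigenvalue bound directly from Choi's inequality, using only two elementary facts about positive matrices in $\Md$: Weyl's monotonicity principle ($0\le Y\le X$ implies $\lambda_j[Y]\le\lambda_j[X]$ for every $j$), and $\lambda_j[X^r]=\lambda_j[X]^r$ for $r>0$.

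First I would reduce to $0<p\le q$: the statement is symmetric in $p$ and $q$, and the case $p=0$ is trivial since then $\Phi(A^p)=I$, so that $\lambda_j[\Phi(A^p)]\,\lambda_j[\Phi(A^q)]=\lambda_j[\Phi(A^{q})]$. With $0<p\le q$ the exponent $p/q$ lies in $(0,1]$, so $t\mapsto t^{p/q}$ is operator concave on $[0,\infty)$, and Choi's inequality applied to the positive operator $A^q$ gives
$$\Phi(A^p)=\Phi\big((A^q)^{p/q}\big)\le\Phi(A^q)^{p/q}.$$
By monotonicity of eigenvalues this yields $\lambda_j[\Phi(A^p)]\le\lambda_j[\Phi(A^q)]^{p/q}$ for every $j$.

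Next, since $1\le 1+p/q\le 2$, the function $t\mapsto t^{1+p/q}$ is operator convex, so Choi's inequality in its operator-convex form — this is exactly the inequality (1.2) used in the proof of Theorem \ref{kad} — gives
$$\Phi(A^q)^{1+p/q}\le\Phi\big((A^q)^{1+p/q}\big)=\Phi(A^{p+q}),$$
hence $\lambda_j[\Phi(A^q)]^{1+p/q}\le\lambda_j[\Phi(A^{p+q})]$. Multiplying the first estimate by $\lambda_j[\Phi(A^q)]$ and chaining with the second,
$$\lambda_j[\Phi(A^p)]\,\lambda_j[\Phi(A^q)]\le\lambda_j[\Phi(A^q)]^{p/q}\,\lambda_j[\Phi(A^q)]=\lambda_j[\Phi(A^q)]^{1+p/q}\le\lambda_j[\Phi(A^{p+q})],$$
which is the asserted inequality.

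I do not expect a genuine obstacle here; the one point to be careful about is that one must \emph{not} try to deduce this from Theorem \ref{kad} by comparing $s_j(\Phi(A^p)\Phi(A^q))$ with $\lambda_j[\Phi(A^p)]\,\lambda_j[\Phi(A^q)]$, since these two quantities are not comparable in general. The pointwise statement genuinely uses that $A^p$ is itself a (smaller) power of $A^q$, which is what allows $\lambda_j[\Phi(A^p)]$ to be bounded on its own via Choi's inequality.
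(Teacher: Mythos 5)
Your argument is correct, and it takes a genuinely different route from the paper's. The paper reduces (via Stinespring) to compression maps $X\mapsto EXE$, proves the top-eigenvalue case $\Vert EA^pE\Vert_{\infty}\,\Vert EA^qE\Vert_{\infty}\le\Vert EA^{p+q}E\Vert_{\infty}$ from the \emph{scalar} Young and Jensen inequalities, and then propagates this to all $j$ by the min-max principle (cutting down by a corank-$(j-1)$ projection commuting with $E$). You instead stay at the level of the map $\Phi$ itself and apply Choi's inequality twice --- the operator-concave form to get $\lambda_j[\Phi(A^p)]\le\lambda_j[\Phi(A^q)]^{p/q}$ and the operator-convex form to get $\lambda_j[\Phi(A^q)]^{1+p/q}\le\lambda_j[\Phi(A^{p+q})]$ --- combined with Weyl's monotonicity; every step checks out. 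What each buys: the paper's proof is elementary in that it uses no operator convexity/concavity at all (only scalar convexity applied to vector states) and fits the compression-based theme of Section 2, whereas yours is shorter, avoids the Stinespring dilation entirely, and applies verbatim to any unital positive map. You could streamline further by using only the concave form of Choi applied to $A^{p+q}$: since $p/(p+q)$ and $q/(p+q)$ both lie in $[0,1]$, one has $\Phi(A^p)\le\Phi(A^{p+q})^{p/(p+q)}$ and $\Phi(A^q)\le\Phi(A^{p+q})^{q/(p+q)}$, and multiplying the resulting eigenvalue bounds gives the claim symmetrically in $p$ and $q$ with no case distinction. Your closing caveat is also well taken: $s_j(\Phi(A^p)\Phi(A^q))$ need not dominate $\lambda_j[\Phi(A^p)]\,\lambda_j[\Phi(A^q)]$, so the proposition is not a formal consequence of Theorem \ref{kad}.
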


\begin{proof}
We outline an elementary proof. It suffices to show that for a given
projection $E$,
\begin{equation}
\lambda_j[EA^pE]\,\lambda_j[EA^qE] \le \lambda_j[EA^{p+q}E].
\end{equation}
In case of the first eigenvalue, this can be written via the
operator norm $\Vert\cdot\Vert_{\infty}$ as
\begin{equation}
\Vert EA^pE\Vert_{\infty} \Vert EA^qE\Vert_{\infty} \le \Vert
EA^{p+q}E\Vert_{\infty}.
\end{equation}
The proof of (2.4) follows from Young's and Jensen's inequalities
(always true for the operator norm),
\begin{align*}
\Vert EA^pE\Vert_{\infty} \Vert EA^qE\Vert_{\infty}
&\le\frac{p}{p+q}\Vert EA^pE\Vert_{\infty}^{\frac{p+q}{p}} +
\frac{q}{p+q}\Vert EA^qE\Vert_{\infty}^{\frac{p+q}{q}} \\
&\le \Vert EA^{p+q}E\Vert_{\infty}.
\end{align*}
The min-max characterization of eigenvalues combined with (2.4)
implies the proposition; indeed simply take $Q$ a projection commuting with
 $E$ of corank $j-1$ so that
 $\Vert QEA^{p+q}EQ\Vert_{\infty}=\lambda_j[EA^{p+q}E]$ and apply (2.4)
with $QE$ instead of $E$.
\end{proof}

\medskip
Results of this section follow from (2.1), equivalently from
Corollary 2.2, and hence have been stated for unital positive maps.
In fact these results can be stated to all sub-unital positive maps.
In particular the key Corollary 2.2 becomes :

\medskip\noindent {\bf Corollary 2.2a.} {\it Let $(A,B)$ be a  monotone
pair in $\Mn$ and let $\Phi:\Mn\to \Md$
 be a sub-unital positive map. Then, for some  unitary $V\in \Md$,
$$
\Phi(A)\Phi(B)\Phi(A) \le V\Phi(ABA)V^*.
$$
}

\begin{proof} Let ${\mathcal{A}}$ be the unital $*$-algebra generated
by $A$ and $B$. Restricting $\Phi$ to ${\mathcal{A}}$, it follows
from Stinespring's theorem (or from Naimark's theorem) that $\Phi$
can be decomposed as $\Phi(A)=Z\pi(A)Z$, where $\pi:{\mathcal{A}}\to
\mathbb M_{m}$ is a $*$-representation (with $m\leq nd$) and $Z\in
\mathbb M_{m}$ is a positive contraction (and identifying
 $E\mathbb M_{m}E$ with $\Md$ for some projection $E\ge Z$).

 Since $(\pi(A),\pi(B))$ is monotone, it then suffices to prove the result for
 congruence  maps of $\Mn$ of type $\Phi(X)=ZXZ$ where $Z$ is a positive contraction.
 We may then derive the result from (2.1) and a two-by-two trick :
 Note that
$$
A_0=\begin{pmatrix} A&0 \\ 0&0 \end{pmatrix} \quad {\mathrm {and}}
\quad B_0= \begin{pmatrix} B&0 \\ 0&0 \end{pmatrix}
$$
form a monotone pair. Note also that
$$
E=\begin{pmatrix} Z&(Z(I-Z))^{1/2} \\ (Z(I-Z))^{1/2}& I-Z
\end{pmatrix}
$$
is a projection. By (2.1),
\begin{equation*}
\lambda_j[(EA_0E)(EB_0E)(EA_0E)]\le \lambda_j[EA_0B_0A_0E],
\end{equation*}
equivalently,
\begin{equation}
\lambda_j[\,|EA_0EB_0^{1/2}|^2\,]\le
\lambda_j[(A_0B_0A_0)^{1/2}E(A_0B_0A_0)^{1/2}].
\end{equation}
Observe that
\begin{equation}
|EA_0EB_0^{1/2}|^2=\begin{pmatrix} B^{1/2}ZAZAZB^{1/2} & 0
\\ 0&0\end{pmatrix}\simeq
\begin{pmatrix} Z^{1/2}AZBZAZ^{1/2} & 0
\\ 0&0\end{pmatrix}
\end{equation}
where $\simeq$ means unitary equivalence, and similarly,
\begin{equation}
(A_0B_0A_0)^{1/2}E(A_0B_0A_0)^{1/2}\simeq\begin{pmatrix}
Z^{1/2}ABAZ^{1/2} & 0
\\ 0&0\end{pmatrix}.
\end{equation}
Combining (2.6) and (2.7) with (2.5) and replacing $Z^{1/2}$ by $Z$
yields $$ZAZ\cdot ZBZ\cdot ZAZ \le V(ZABAZ)V^*$$ for some unitary
$V$.
\end{proof}

 We end Section 2 with a remark about the two-by-two
trick used to derive Corollary 2.4. This can be used to get some
triangle type matrix inequalities. For instance, given two operators
$A$ and $B$ in some von Neumann algebra $\M$, there exists a partial
isometry $V$ such that:
\begin{equation}
|A+B|\le \frac{|A|+|B| +V^*(|A^*|+|B^*|)V}{2}.
\end{equation}
To check it, note
that, since for all $X$,
\begin{equation*}
  \begin{pmatrix}
 |X^*| & X
 \\ X^* & |X|
 \end{pmatrix}
  \ge 0,
  \end{equation*}
we thus have for all $V$,
\begin{equation*}
\begin{pmatrix}
-V^* I
\end{pmatrix}
  \begin{pmatrix}
 |A^*|+ |B^*|& A+B
 \\ A^*+B^* & |A|+|B|
 \end{pmatrix}
   \begin{pmatrix}
-V \\ I
\end{pmatrix}\ge 0,
\end{equation*}
and taking $V$ the partial isometry in the polar decomposition of
$A+B$ yields (2.8). This can be used to give a very short proof of
the triangle inequality for the trace norm in semi-finite von
Neumann algebras.

 \section{Means and order preserving relations}

Furuta's inequality was used as key tool in the first section; here
we present a possible proof for completeness.
 For that purpose we use the geometric mean of positive definite matrices
and Ando-Hiai's inequality. We do not pretend to originality and we
closely follow an approach due to Ando, Hiai, Fujii and Kamei.
However, we point out an interesting observation connecting the
geometric mean to complex interpolation. In fact this observation is
rather old : Identifying positive operators with quadratic forms, it
is worth noting that Donoghue's construction with complex
interpolation \cite{Do} seems to be the first appearance of the
matrix geometric mean.

In the whole section we consider $\B(\H)$, the set of  all bounded
operators on a  Hilbert space $\H$,  and its positive invertible
part, $\B^+$.

 For details and some  important results around the geometric mean we
refer to \cite{A}, \cite{AH}, references herein, and  \cite{Bha} for
a nice survey of other features of the weighted geometric means,
especially as geodesics on the cone of positive operators.

\subsection{Means and interpolation}

Let $\alpha\in[0,1]$ and consider a map
$$\begin{array}{clc}
\B^+\times \B^+ &\to & \B^+ \\
(A,B) &\mapsto &A\sharp_\alpha B\end{array}$$
satisfying the two natural requirements for an $\alpha$-geometrical mean
\begin{enumerate}[1.]
\item If $AB=BA$  then $A\sharp_\alpha  B=A^{1-\alpha}B^\alpha $.
\item $(X^*AX)\sharp_\alpha  (X^*BX)=X^*(A\sharp_\alpha B)X$
for any invertible $X$.
\end{enumerate}

Choosing the appropriate $X$, we necessarily have
\begin{equation*}
A\sharp_\alpha B  =A^{1/2}{(A^{-1/2}BA^{-1/2})}^{\alpha }A^{1/2}.
\end{equation*}
So there is a unique extension of the $\alpha$-geometrical mean for
commuting operators which is invariant under congruence, that is called
the $\alpha$-geometrical mean.

Matrix geometric means have their roots in the work of Pusz and
Woronowicz \cite{PW} about functional calculus for sesquilinear
forms. Their construction is closely related to complex
interpolation. Coming back to means, these links are even clearer.

We briefly recall the complex interpolation method of Calderon, see \cite{BL}
for a complete exposition.

Two Banach spaces $A_0$ and $A_1$ are said to be an interpolation
couple if there is another Banach space $V$ and continuous embeddings
$A_i\to V$.  So we have a way to identify elements and it makes sense
to speak of $A_0\cap A_1$ and $A_0+A_1$ (which are also Banach spaces
with the usual norms).  The idea of interpolation is to assign for
each $\alpha \in [0,1]$ a space that is intermediate between the
$A_i$. The construction is a bit technical.

Let $\Delta=\{ z\in \C \,|\, 0<{\rm Re} \,z<1\}$, $\delta_i=\{ z\in
\C \,|\, {\rm Re}\, z=i\}$ for $i=0$, 1. Define $\mathcal
F(A_0,A_1)$ as the set of maps $f:\overline \Delta \to A_0+A_1$,
such that
\begin{enumerate}[i)]
\item $f$ is analytic in $\Delta$.
\item for $i=0$, 1, $f(\delta_i)\subset A_i$ and
$f:\delta_i\to A_i$ is bounded and continuous.
\item for $i=0$, 1, $\lim_{t\in \R\to \pm\infty} ||f(i+{\rm
i}t)||_{A_i}=0$.
\end{enumerate}
Equipped with the norm
$$||f||=\max_{i=0,\,1}\{ \sup_{z\in \delta_i} ||f(z)||_{A_i}\}$$
$\mathcal F(A_0,A_1)$ becomes a Banach space. Finally for $\alpha \in [0,1]$,
$$(A_0,A_1)_\alpha=\{ x\in A_0+A_1 \,:\, \exists f\in \mathcal
F(A_0,A_1) \textrm{ so that} f(\alpha)=x\,\} $$
with the quotient norm
$$||x||_{(A_0,A_1)_\alpha}=\inf \{ ||f|| \,:\, f(\alpha)=x\,\}.$$

This functor has many nice properties. The most common is the
interpolation principle; consider two interpolation couples
$(A_0,A_1)$ and $(B_0,B_1)$ and bounded maps $T_i:A_i\to B_i$ so that
$T_1$ and $T_2$ coincide on $A_0\cap A_1$, then one can define a map
$T_\alpha:(A_0,A_1)_\alpha\to(B_0,B_1)_\alpha$ which extends $T_i$ on
$A_0\cap A_1$, and moreover one has $||T_\alpha||\leq ||T_0||^{1-\alpha}
||T_1||^\alpha$.

\smallskip

There are concrete examples where these interpolated norms can be
computed. If $A_0=L_\infty([0,1])$ and $A_1=L_1([0,1])$, one has
$(A_0,A_1)_\alpha=L_{1/\alpha}([0,1])$.

\smallskip

Using basic properties of the interpolation, it is easy to see that
the interpolation of two compatible Hilbert spaces is still a
Hilbert space. Indeed, by \cite{BL} Theorem 5.1.2, the parallelogram
identity
 is preserved by the complex interpolation method.

Let $A\in \B^+$, then it defines an equivalent hilbertian norm on
$\H$ by $||h||_{A}=||A^{1/2}h||_{\H}$. And conversely
any equivalent hilbertian norm on $\H$ arises from some $A\in \B^+$.
We denote by $\H_A$ the Hilbert space coming from $A$.

Now take $A_i\in \B^+$, $(\H_{A_0}, \H_{A_1})$ forms an
interpolation couple of Hilbert space (with the obvious
identification).  The resulting interpolated space for $\alpha\in
[0,1]$ will also give an equivalent norm on $\H$, associated to an
operator that we call $A_\alpha$. Let's have a look at the
properties of $(A_0,A_1)\mapsto I_\alpha(A_0,A_1)=A_\alpha$.

First, it is an easy exercise to check that if $A_0$ and $A_1$
commute then
$I_\alpha(A_0,A_1)=A_\alpha=A_0^{1-\alpha}A_1^{\alpha}$.

Secondly, let $X\in \B(\H)$ be invertible. With $B_i=X^*A_iX$, it is
clear that $X:\H_{B_i}\to \H_{A_i}$ is a unitary for $i=0$, 1. From
the interpolation principle, $X$ will also be unitary for the
interpolated norms. Coming back to operators, this says that
$I_\alpha(X^*A_0X,X^*A_1X)=X^*I_\alpha(A_0,A_1)X$.

So we can conclude that the $\alpha$-geometric mean is the
interpolation functor of index $\alpha$. With this in mind, all properties
of the means come from basic results in the complex interpolation theory.

\smallskip

 Take $(A_0,A_1)$ and $(B_0,B_1)$ in $(\B^+)^2$ and assume that $B_i\leq
A_i$. This means that the identity of $\H$ is a contraction from
$\H_{A_i}$ to $\H_{B_i}$.  By the interpolation principle, the same
holds for the interpolated norms. So we can conclude that the
$\alpha$-mean is monotone. Note that this gives another proof of the
monotony of $A\mapsto A^{\alpha}$ for $0\leq \alpha\leq 1$.

\smallskip

To get concavity of the mean is also easy for people familiar with
 interpolation.  Take $A_i$ and $B_i$ in $\B^+$, and $0<\lambda<1$ and
 notice that the map $\H_{\lambda A_i+ (1-\lambda) B_i}\to \H_{\lambda
 A_i}\oplus_2 \H_{(1-\lambda)B_i}$, $h\mapsto (h,h)$ is an
 isometry. From properties of the interpolation functor, we deduce
 that the same map $\H_{(\lambda A_0+ (1-\lambda)
 B_0)\sharp_\alpha(\lambda A_1+ (1-\lambda) B_1)}\to \H_{\lambda
 (A_0\sharp_\alpha A_1)}\oplus_2 \H_{(1-\lambda) (B_0\sharp_\alpha
 B_1)}$ is a contraction. Coming back to an inequality on operators
 gives the concavity. This  illustrates  the well-known fact
 that taking subspaces and interpolation do not commute.

Another useful result is the reiteration theorem : For any
$\alpha,\, \beta, \gamma\in [0,1]$, we have (provided that $A_0\cap
A_1$ is dense in both $A_0$ and $A_1$)
$$((A_0,A_1)_\alpha,
(A_0,A_1)_\beta)_\gamma=(A_0,A_1)_{(1-\gamma)\alpha+\gamma\beta}.$$
This means that for any $x,\,y,\,z\in [0,1]$ and $A,\, B\in \B^+$,
$$(A\sharp_x B)\sharp_z (A\sharp_y B)=A\sharp_{x(1-z)+yz}B.$$
Of course this can also be checked directly from the  formulae
defining  $\sharp$.

 The next theorem is  the Ando-Hiai inequality. We only use
the language of operator mean, but this is really a proof in the
spirit of the interpolation theory.

\begin{theorem} Let $A, B\in \B^+$ and $0<s<1$. Then,
$$
\Vert (A\sharp_{\alpha} B)^s\Vert_{\infty} \le \Vert
A^s\sharp_{\alpha} B^s\Vert_{\infty}.
$$
\end{theorem}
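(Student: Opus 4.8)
The plan is to reduce the stated norm inequality to an order relation and then to an extrapolation statement that one proves in the range where powers remain operator convex. Since we work in $\B^+$, both $A$ and $B$ are invertible, and the cases $\alpha\in\{0,1\}$ are trivial, so assume $0<\alpha<1$. First I would normalise. Because $\sharp_\alpha$ is congruence–invariant (property 2), it is homogeneous: $(\l A)\sharp_\alpha(\l B)=\l(A\sharp_\alpha B)$ for $\l>0$. As $A\sharp_\alpha B\ge0$ one has $\Vert(A\sharp_\alpha B)^s\Vert_\infty=\Vert A\sharp_\alpha B\Vert_\infty^s$, so writing $M=\Vert A^s\sharp_\alpha B^s\Vert_\infty$ and $c=M^{1/s}$ the claim is exactly $A\sharp_\alpha B\le cI$. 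Replacing $(A,B)$ by $(c^{-1}A,c^{-1}B)$ and using homogeneity, the hypothesis turns into $(c^{-1}A)^s\sharp_\alpha(c^{-1}B)^s=c^{-s}(A^s\sharp_\alpha B^s)\le c^{-s}MI=I$, while the goal becomes $(c^{-1}A)\sharp_\alpha(c^{-1}B)\le I$. Thus it suffices to prove the implication
$$(\star)\qquad A^s\sharp_\alpha B^s\le I\ \Longrightarrow\ A\sharp_\alpha B\le I\qquad(0<s<1).$$
This is the order form of the Ando--Hiai inequality; I stress that the naive operator strengthening $A^s\sharp_\alpha B^s\le(A\sharp_\alpha B)^s$ is \emph{false}, which is precisely why the normalisation is unavoidable.

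Next I would localise the exponent. Let $\mathcal S$ be the set of $s\in(0,1]$ for which $(\star)$ holds for all $A,B\in\B^+$. Clearly $1\in\mathcal S$, and $\mathcal S$ is closed under multiplication: if $s_1,s_2\in\mathcal S$ and $A^{s_1s_2}\sharp_\alpha B^{s_1s_2}=(A^{s_2})^{s_1}\sharp_\alpha(B^{s_2})^{s_1}\le I$, then $(\star)$ for $s_1$ applied to $A^{s_2},B^{s_2}$ gives $A^{s_2}\sharp_\alpha B^{s_2}\le I$, and $(\star)$ for $s_2$ then gives $A\sharp_\alpha B\le I$. Since every $s\in(0,1]$ can be written as $(s^{1/m})^m$ with $s^{1/m}\in[\tfrac12,1]$ once $m$ is large, it suffices to establish $[\tfrac12,1]\subseteq\mathcal S$.

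Finally, the heart of the matter is $(\star)$ for $s\in[\tfrac12,1]$, that is, for the conjugate exponent $r=1/s\in[1,2]$. Here I would exploit that $t\mapsto t^{r}$ is operator convex with value $0$ at $0$ (this is the only range of exponents for which this holds), so that Jensen's operator inequality of Hansen--Pedersen is available, combined with the monotonicity and the joint concavity of $\sharp_\alpha$ already obtained from the interpolation functor. In the language of this section the hypothesis reads $I_\alpha(I_s(I,A),I_s(I,B))\le I$ (using $A^s=I_s(I,A)$, i.e. $\H_{A^s}=(\H,\H_A)_s$) and the conclusion $I_\alpha(A,B)\le I$; the difficulty is exactly that recovering $I_\alpha(A,B)$ from the lower powers $A^s,B^s$ is an \emph{extrapolation}, for which no contraction between the relevant Hilbert spaces exists directly.

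I expect this last step to be the main obstacle. One cannot simply raise $A^s\sharp_\alpha B^s\le I$ to the power $1/s$, because $t\mapsto t^{1/s}$ is not operator monotone; instead one must use its operator convexity on $[0,2]$ to transport the order relation through the mean, in the manner of Ando--Hiai and Fujii--Kamei. Combining the three steps --- normalisation to $(\star)$, reduction of the exponent to $[\tfrac12,1]$, and the operator-convexity argument on that interval --- yields the theorem.
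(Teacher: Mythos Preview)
Your normalisation and the reduction to the order form $(\star)$ are correct, and the observation that the set $\mathcal S$ of admissible $s$ is multiplicatively closed is fine. The gap is that you never prove $[\tfrac12,1]\subseteq\mathcal S$: you list ingredients (operator convexity of $t\mapsto t^{r}$ for $r\in[1,2]$, Hansen--Pedersen, joint concavity of $\sharp_\alpha$) without indicating how they combine to yield $A^s\sharp_\alpha B^s\le I\Rightarrow A\sharp_\alpha B\le I$, and you yourself flag this as ``the main obstacle''. Since that implication is exactly the content of the theorem, what you have is a reformulation together with a bootstrap device, not a proof. It is also not clear that operator convexity of $t^{1/s}$ alone suffices here: the difficulty is precisely that one must push an order relation \emph{up} through a power, and Jensen-type inequalities go the other way.

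The paper avoids this obstacle entirely by normalising on the \emph{other} side. One assumes $A\sharp_\alpha B\le I$ and shows directly that $A^s\sharp_\alpha B^s\ge A\sharp_\alpha B$, which gives the norm inequality at once. The argument is a short chain using only monotonicity of $\sharp_\alpha$ and the reiteration identity $(A\sharp_x B)\sharp_z(A\sharp_y B)=A\sharp_{(1-z)x+zy}B$: write $A^s=I\sharp_s A$, $B^s=I\sharp_s B$, replace $I$ by the smaller $A\sharp_\alpha B$, and then apply reiteration twice to collapse the nested means back to $A\sharp_\alpha B$. No localisation in $s$, no iteration, and no Hansen--Pedersen are needed.
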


\begin{proof} By homogeneity we may assume $\Vert A\sharp_{\alpha} B\Vert_{\infty}=
1$. Hence we have $A\sharp_{\alpha} B\le I$. By using monotony of
geometric means and the reiteration principle we then get
\begin{align*}
A^s\sharp_{\alpha} B^s &= (I\sharp_s A) \sharp_{\alpha} (I\sharp_s B) \\
&\ge ((A\sharp_{\alpha} B)\sharp_s A)\sharp_{\alpha} ((A\sharp_{\alpha} B)\sharp_s B) \\
&= ((A\sharp_{\alpha} B)\sharp_s(A\sharp_0 B))\sharp_\alpha ((A\sharp_{\alpha} B)\sharp_s (A\sharp_1B)) \\
&= (A\sharp_{\alpha(1-s)}  B)\sharp_{\alpha} (A\sharp_{\alpha(1-s)+s}  B) =A\sharp_{\alpha} B. \\
\end{align*} Thus $\Vert
A^s\sharp_{\alpha} B^s\Vert_{\infty} \ge 1$ and this proves the
theorem.
\end{proof}

\medskip

\noindent{\bf Remark}. A theory of complex interpolation for
families of Banach spaces has been developed in \cite{CCRSW}. The
family may be indexed  by the unit circle in $\C$, say $A(z)$, with
some technical assumptions. The interpolation then provides a family
of spaces $A(z)$ for $|z|<1$. This can be used define a mean of
several operators. For instance, in the case of $n$ operators, one
may pick a  partition of the unit circle in $n$ sets $E_i$ with
Lebesgue measure $\alpha_i$, and choose the family $A(z)=\H_{A_i}$
if $z\in E_i$. Then the interpolated space at $0$ is of the form
$A(0)=\H_A$, and one may think of $A$ as a $(\alpha_i)$-mean of the
$A_i$'s. Unfortunately this definition depends on the choice of the
$E_i$ (unless $n=2$). This kind of approach for interpolation of a
finite family of spaces can also be found in \cite{Fa}.

\subsection{From means to order relations}
Next we explain how to go from Ando-Hiai's inequality to Furuta's
theorem (their equivalence was pointed out in \cite{FK}).

 Let  $A,\, B\in \B^+$ with $A\ge B$. Then, $A^{-1}\s_{1/2} B\le
I$, so by Ando-Hiai's inequality, $A^{-p}\s_{1/2} B^p\le I$ whenever
$p\ge 1$. Equivalently we have an order preserving relation for
$f(t)=t^p$ with  $p\ge 1$,
$$ A^p\ge (A^{p/2}B^pA^{p/2})^{1/2}, \quad p\ge 1.$$
Such inequalities suggest to look for the best exponents $p, r, w$ for which
\begin{equation}
A\ge B\ge 0 \quad \Rightarrow \quad  A^{(p+r)w}\ge
(A^{r/2}B^pA^{r/2})^w
\end{equation}
and consequently to get interesting substitutes to the lack of
operator monotony of $f(t)=t^p$, $p\ge 1$.

  To do so, it seems natural to find
relations for weighted geometric means of the form
\begin{equation}\label{asb}
A\ge B\ge 0 \quad  \Rightarrow \quad  A^{-r}\s_{\a}B^p\le I.
\end{equation}
Because of homogeneity, this can hold only for $\alpha=\frac {r}{p+r}$.
If $p\leq 1$, this inequality is obvious
 by the monotony of the mean. For $p>1$,
as above thanks to Ando-Hiai's inequality, one only need to find
$s\leq 1$ so that  $A^{-s r}\s_{\a}B^{sp}\le I$; we've just said
that $s=1/p$ works. We have proved :
\begin{lemma} Let  $A,\, B\in \B^+$ with $A\ge B$ and $p,r>0$. Then,
$$
 A^{-r}\sharp_{\frac{r}{p+r}} B^p\le I.
$$
\end{lemma}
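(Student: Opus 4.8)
The plan is to split according to whether $p\le 1$ or $p>1$, handling $p\le 1$ directly by monotony of the mean and then reducing $p>1$ to that case via the Ando--Hiai inequality (Theorem 3.1). As a preliminary, note that $A\sharp_\beta B$ is homogeneous of degree $1-\beta$ in $A$ and of degree $\beta$ in $B$; so for the inequality $A^{-r}\sharp_\beta B^p\le I$ to have a chance of holding for all $A\ge B>0$ the total degree must vanish on scalars, which forces $\beta=\frac{r}{p+r}$. Write $\alpha=\frac{r}{p+r}$ from now on.

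First I would treat $p\le 1$. Since $A\ge B\ge 0$ and $t\mapsto t^p$ is operator monotone for $0<p\le 1$, we have $B^p\le A^p$, and then monotony of the $\alpha$-mean (established in Section 3 from the interpolation principle) gives $A^{-r}\sharp_\alpha B^p\le A^{-r}\sharp_\alpha A^p$. Since $A^{-r}$ and $A^p$ commute, the mean collapses to a product of powers: $A^{-r}\sharp_\alpha A^p=(A^{-r})^{1-\alpha}(A^p)^{\alpha}=A^{-r(1-\alpha)+p\alpha}$, and with $\alpha=\frac{r}{p+r}$ one checks $-r(1-\alpha)+p\alpha=0$, so $A^{-r}\sharp_\alpha A^p=I$. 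This disposes of $p\le 1$, in particular the borderline $p=1$ for every $r>0$.

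For $p>1$ I would put $s=1/p\in(0,1)$ and apply Theorem 3.1 to the pair $(A^{-r},B^p)$ (both positive invertible since $A,B\in\B^+$), getting $\Vert (A^{-r}\sharp_\alpha B^p)^{1/p}\Vert_\infty\le\Vert (A^{-r})^{1/p}\sharp_\alpha (B^p)^{1/p}\Vert_\infty=\Vert A^{-r/p}\sharp_\alpha B\Vert_\infty$. Now $\alpha=\frac{r}{p+r}=\frac{r/p}{1+r/p}$, so $A^{-r/p}\sharp_\alpha B$ is precisely an instance of the case already proved (exponent $p=1$, with $r/p$ in place of $r$), whence $A^{-r/p}\sharp_\alpha B\le I$ and its operator norm is $\le 1$. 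Therefore $\Vert (A^{-r}\sharp_\alpha B^p)^{1/p}\Vert_\infty\le 1$, hence $\Vert A^{-r}\sharp_\alpha B^p\Vert_\infty\le 1$, i.e. $A^{-r}\sharp_\alpha B^p\le I$.

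I do not expect a genuine obstacle; the proof is bookkeeping of the weight plus one use of Ando--Hiai. The points that need care are: (i) invertibility of $A$ and $B$, which is the standing hypothesis $A,B\in\B^+$, so all inverse powers and means are well defined; (ii) using Ando--Hiai in the correct direction — it lets one pull a power $s<1$ \emph{out} of $\sharp_\alpha$, which is exactly why one takes $s=1/p$ rather than a power exceeding $1$; and (iii) the scalar identity $-r(1-\alpha)+p\alpha=0$, which is simultaneously why the commuting estimate reduces to $I$ and why homogeneity pins down $\alpha=\frac{r}{p+r}$.
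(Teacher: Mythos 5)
Your proof is correct and follows the paper's own argument essentially verbatim: the case $p\le1$ by operator monotony of $t^p$ plus monotony of the mean (with the commuting case collapsing to $I$ because $-r(1-\alpha)+p\alpha=0$), and the case $p>1$ by applying Ando--Hiai with $s=1/p$ to reduce to the already-settled case with $r/p$ in place of $r$. No gaps; the direction in which you invoke Theorem 3.1 is the right one.
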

Taking another mean with $B^p$, we obtain the optimal form of
(\ref{asb}) :
 \begin{lemma}
 Let $A,\, B\in \B^+$ with $A\ge B$ and $r>0$, $p\ge 1$. Then,
$$
 A^{-r}\sharp_{\frac{1+r}{p+r}} B^p\le B \le A.
$$
\end{lemma}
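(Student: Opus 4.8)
The plan is to start from the preceding lemma and take one further weighted geometric mean against $B^p$, exactly as the sentence introducing the statement indicates. Since $A\ge B$, the preceding lemma gives $A^{-r}\sharp_{\frac{r}{p+r}}B^p\le I$. I would then apply the order-preserving map $X\mapsto X\sharp_z B^p$ to both sides, for a value $z\in(0,1]$ still to be chosen; monotony of the $z$-mean in its first variable yields
$$
\bigl(A^{-r}\sharp_{\frac{r}{p+r}}B^p\bigr)\sharp_z B^p \;\le\; I\sharp_z B^p ,
$$
and since $I$ commutes with $B^p$ the right-hand side is simply $(B^p)^z$. Taking $z=\tfrac1p$ (allowed because $p\ge 1$) makes the right-hand side equal to $B$.

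It then remains to recognize the left-hand side. Writing $B^p=A^{-r}\sharp_1 B^p$ and applying the reiteration identity $(C\sharp_x D)\sharp_z(C\sharp_y D)=C\sharp_{x(1-z)+yz}D$ with $C=A^{-r}$, $D=B^p$, $x=\tfrac{r}{p+r}$, $y=1$, $z=\tfrac1p$, the exponent produced is
$$
\frac{r}{p+r}\cdot\frac{p-1}{p}+\frac1p=\frac{r(p-1)+(p+r)}{p(p+r)}=\frac{1+r}{p+r},
$$
so the left-hand side is exactly $A^{-r}\sharp_{\frac{1+r}{p+r}}B^p$. Combined with the hypothesis $B\le A$, this produces the full chain $A^{-r}\sharp_{\frac{1+r}{p+r}}B^p\le B\le A$, which is the optimal form of (\ref{asb}).

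The argument is essentially bookkeeping, so I do not anticipate a genuine obstacle; the points to watch are that $\tfrac{1+r}{p+r}\in[0,1]$ and $z=\tfrac1p\le 1$ — both consequences of $p\ge 1$ — and that, since all the spaces $\H_{A_i}$ coincide as sets for invertible positive operators, the density hypothesis in the reiteration theorem is automatically satisfied (alternatively the identity can be checked directly from the congruence formula for $\sharp$). The one thing worth emphasizing is that $z=\tfrac1p$ is forced: it is simultaneously the unique value collapsing $I\sharp_z B^p$ to $B$ and the unique value steering the reiteration exponent onto $\tfrac{1+r}{p+r}$, which is precisely what makes this the sharp statement.
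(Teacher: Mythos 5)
Your proof is correct and is exactly the argument the paper intends: the text's one-line justification ``Taking another mean with $B^p$'' is precisely your step of applying $X\mapsto X\sharp_{1/p}B^p$ to Lemma 3.2 and using the reiteration identity to identify the resulting weight as $\tfrac{1+r}{p+r}$. The arithmetic and the monotony/reiteration checks are all in order, so nothing further is needed.
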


\noindent Hence we have recaptured quite easily two lemmas due to
Fujii and Kamei \cite{FK}.

  We come back to relations of the form
(3.1), the last lemma says :
\begin{equation}
A\ge B\ge 0 \quad \Rightarrow \quad A^{1+r} \ge
(A^{\frac{r}{2}}B^pA^{\frac{r}{2}})^{\frac{1+r}{p+r}},\qquad r>0,\
p\ge 1.
\end{equation}
Equivalently,
$$
A^{(p+r)w} \ge (A^{\frac{r}{2}}B^pA^{\frac{r}{2}})^{w},\qquad r>0,\
p\ge 1,
$$ where $w=\frac{1+r}{p+r}$. This is still valid for
$w\le\frac{1+r}{p+r}$ by the operator monotony of $t\mapsto
t^\alpha$, $0\leq \alpha\leq 1$. We obtain Furuta's theorem :
\begin{theorem} Let $A,\, B\ge 0$ in $\B(\H)$ and $r\ge 0$, $p\ge 1$. If
  $q\ge (p+r)/(1+r)$, then
$$
A^{(p+r)/q} \ge (A^{\frac{r}{2}}B^pA^{\frac{r}{2}})^{1/q}.
$$
\end{theorem}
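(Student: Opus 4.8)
The plan is to derive Furuta's theorem from Lemma~3.3 together with relation (3.3); these already carry all the analytic content (through Ando--Hiai's inequality, Theorem~3.1), so what remains is a congruence manipulation and an exponent reduction. Throughout I take $A\ge B\ge 0$, as in (3.3). First I would reduce to the case $A,B\in\B^+$: replacing $A$ by $A+\varepsilon I$ and $B$ by $B+\varepsilon I$ preserves $A\ge B$, makes both operators positive invertible, and renders both sides of the desired inequality norm-continuous functions of $\varepsilon\ge 0$ (continuity of the functional calculus on $[0,\infty)$); since operator inequalities pass to norm limits, the invertible case is enough. I would also treat $r=0$ separately: there the assertion reads $A^{p/q}\ge B^{p/q}$, and since $q\ge(p+r)/(1+r)=p$ forces $p/q\le 1$, this is just operator monotony of $t\mapsto t^{p/q}$ applied to $A\ge B$. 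So assume from now on that $r>0$, $p\ge 1$, and $A,B\in\B^+$.

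The key step converts Lemma~3.3 into a power inequality by congruence. Set $\alpha=\frac{1+r}{p+r}$; Lemma~3.3 gives $A^{-r}\sharp_\alpha B^p\le A$. Apply the congruence property $(X^*CX)\sharp_\alpha(X^*DX)=X^*(C\sharp_\alpha D)X$ with the invertible operator $X=A^{r/2}$. Since the powers of $A$ commute, $A^{r/2}A^{-r}A^{r/2}=I$, and since $I$ commutes with every operator, $I\sharp_\alpha C=C^\alpha$ for all $C\in\B^+$. Hence
$$
(A^{r/2}B^pA^{r/2})^{\alpha}=A^{r/2}(A^{-r}\sharp_\alpha B^p)A^{r/2}\le A^{r/2}AA^{r/2}=A^{1+r},
$$
which is exactly (3.3).

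It remains to go from the single exponent $\alpha=\frac{1+r}{p+r}$ to an arbitrary admissible $q$. Write $w=\frac{1+r}{p+r}$, so (3.3) reads $A^{(p+r)w}\ge(A^{r/2}B^pA^{r/2})^{w}$. Given $q\ge(p+r)/(1+r)$, set $s=\frac{1}{qw}=\frac{p+r}{q(1+r)}$; the condition on $q$ says precisely $0<s\le 1$. Raising both sides of (3.3) to the power $s$ is legitimate by operator monotony of $t\mapsto t^s$ on $[0,\infty)$, and since $ws=1/q$ this yields $A^{(p+r)/q}\ge(A^{r/2}B^pA^{r/2})^{1/q}$, which is the claim.

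I do not expect a genuine obstacle here: the hard work is concentrated in Ando--Hiai's inequality and in Lemmas 3.2--3.3. The only two points that need a little care are the congruence identity $A^{r/2}A^{-r}A^{r/2}=I$ combined with $I\sharp_\alpha C=C^\alpha$ --- this is what collapses the weighted geometric mean to an ordinary power and makes the whole argument go through --- and the $\varepsilon$-regularization at the start that removes the invertibility hypothesis.
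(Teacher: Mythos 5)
Your proof is correct and follows essentially the same route as the paper: it deduces (3.3) from Lemma~3.3 by the congruence with $X=A^{r/2}$, then lowers the exponent from $w=\tfrac{1+r}{p+r}$ to $1/q$ by operator monotony of $t\mapsto t^s$, $0<s\le 1$, and finishes by continuity. You merely make explicit a few points the paper leaves implicit (the congruence computation, the trivial case $r=0$, and the $\varepsilon$-regularization), all of which are handled correctly.
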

\smallskip
\noindent The general statement follows from the case $B\in \B^+$ by
continuity.

\smallskip

\noindent \subsubsection{Comments}   Around 1985 it was conjectured
by Kwong that $A\ge B\ge 0$ entails $A^2\ge (AB^2A)^{1/2}$,
equivalently $A^2\ge |BA|$. In 1987, Furuta \cite{F} proved his
inequality. Some numerical experiments lead him to know the
condition on the exponents and he  obtained a direct, quite
ingenious proof. However the natural conjecture of Kwong may be
written via geometric means and is nicely answered by a basic case
of Ando-Hiai's inequality (1994). Hence order preserving relations
may be obtained from a study of weighted geometric means. We have
followed this idea, mainly developed by Ando, Hiai, Fujii and Kamei.

\bigskip

\bibliographystyle{amsplain}

\end{document}